\newtheorem{thrm}{Theorem}[section]
\newtheorem{lem}[thrm]{Lemma}
\newtheorem{notation}[thrm]{Notation}
\theoremstyle{definition}
\newtheorem{definition}[thrm]{Definition}
\newtheorem{example}[thrm]{Example}
\theoremstyle{remark}
\numberwithin{equation}{section}
\def\Z{{\mathbb Z}}
\def\la{{\langle}}
\def\ra{{\rangle}}
\def\tl{{\unlhd}}
\def\fix{{\rm fix}}
\def\AGL{{\rm AGL}}
\def\Aut{{\rm Aut}}
\def\Stab{{\rm Stab}}
\def\cal{\mathcal}
\def\PStab{{\rm PStab}}
\def\WStab{{\rm WStab}}
\begin{document}
\title[]{Towards inductive proofs in algebraic combinatorics}

\author{Ted Dobson}
\address{University of Primorska, UP IAM, Muzejski trg 2, SI-6000 Koper, Slovenia and, University of Primorska, UP FAMNIT, Glagolja\v{s}ka 8, SI-6000 Koper, Slovenia}
\email{ted.dobson@upr.si}

\begin{abstract}
\smallskip
We introduce a new class of transitive permutation groups which properly contains the automorphism groups of vertex-transitive graphs and digraphs.  We then give a sufficient condition for a quotient of this family to remain in the family, showing that relatively straightforward induction arguments may possibly be used to solve problems in this family, and consequently for symmetry questions about vertex-transitive digraphs.  As an example of this, for $p$ an odd prime, we use induction to determine the Sylow $p$-subgroups of transitive groups of degree $p^n$ that contain a regular cyclic subgroup in this family.  This is enough information to determine the automorphism groups of circulant digraphs of order $p^n$.

\end{abstract}
\subjclass[2020]{Primary 05E18, Secondary 05E30}

\maketitle

For many problems on vertex-transitive graphs, a natural approach to take for the solution is induction.  For example, suppose we want to determine the automorphism group $\Aut(\Gamma)$ of a Cayley digraph $\Gamma$ of a fixed group $G$.  First, if the automorphism group is primitive, one can use the Classification of the Finite Simple Groups together with the O'Nan-Scott Theorem, and perhaps determine $\Aut(\Gamma)$.  In practice, there are many results in the literature that list all primitive groups that have certain orders or properties (see \cite{Li2003,LiS2003,LiebeckPS2010,LiebeckS1985a,LiebeckS1985}, for example).  In many cases, these known results are all that is needed to determine $\Aut(\Gamma)$.  Obviously, induction is not used in this analysis.  However, if $\Aut(\Gamma)$ is not primitive, then it has a nontrivial block system ${\cal B}$, and we may use the Embedding Theorem \cite[Theorem 4.3.1]{Book} of Kalu\v znin and Krasner to embed $\Aut(\Gamma)$ into a permutation group wreath product, say $G_1\wr G_2$.  We have thus reduced our larger problem of what is $\Aut(\Gamma)$ into three problems with smaller groups.  Namely, what are $G_1$ and $G_2$, and, how does one put them together to obtain $\Aut(\Gamma)$?  Ideally, the groups $G_1$ and $G_2$ would be the automorphism groups of smaller vertex-transitive graphs or digraphs.  However, examples are known where $G_1$ is not the automorphism group of a vertex-transitive graph or digraph \cite{Dobson2003b}, and examples are known where $G_2$ is not the automorphism group of a vertex-transitive graph or digraph \cite[Corollary 4.5.8]{Book}.  So while induction can theoretically get us started towards finding $\Aut(\Gamma)$, by itself it fails immediately as there is no general way of knowing if $G_1$ and $G_2$ are the automorphism groups of a graph or digraph.

Nonetheless, there are two inductive approaches that are used to study symmetry questions regarding vertex-transitive graphs and digraphs.  These are Cheryl Praeger's normal quotient lemma \cite{Praeger1985} and graph covers (see \cite[Section 5.5]{Book} and the references there).  These techniques are widely used by many authors, but have their own weaknesses.  The normal quotient lemma, for example, only applies to a small number of vertex-transitive graphs, namely those whose automorphism group is arc-transitive as well, and also does not apply to bipartite graphs.  Graph covers only apply to vertex-transitive graphs with a block system of a subgroup of the automorphism group with a proper intransitive semiregular subgroup (although we comment that Primoz Poto\v cnik and Micael Toledo \cite{PotocnikT2021} have recently removed this restriction provided more information is known).  Additionally, graph covers usually produce many different graphs whose quotient is the quotient one began with, and it may not be practical to determine which one of these graphs is the original graph.

In this paper, we introduce a new inductive technique for questions regarding the automorphism group of vertex-transitive digraphs.  We introduce a new family of permutation groups which can be conveniently placed in Wielandt's $k$-closed hierarchy (although not perfectly), and not only contains the automorphism group of all graphs and digraphs, but all $2$-closed groups.  For this new family, we give in Theorem \ref{quotient 5/2-closed} a sufficient condition for the quotient of a group in this class to be in the class.  This can be interpreted as a sufficient condition for induction, as outlined above, to at least have the potential to be used. To demonstrate that this can work in practice, we use this sufficient condition to inductively determine the Sylow $p$-subgroups of transitive groups in this family that contain a regular cyclic subgroup of odd prime-power order.  This will give the Sylow $p$-subgroups of transitive groups of circulant digraphs of odd prime-power order.  We remark that this is enough information to determine the automorphism groups of circulant digraphs of odd prime-power order, although for length reasons this will be done elsewhere \cite{Dobson2023epreprint}.  In other papers, we will begin to explore applications to the Cayley isomorphism problem \cite{Dobson2023bpreprint}, a generalization of the Polycirculant conjecture \cite{Dobson2023dpreprint}, and classifications of vertex-transitive digraphs of certain orders \cite{Dobson2023cpreprint}.

There is also an important philosophical idea behind this work.  The original idea behind this work is that if one can prove results regarding the automorphism group of some combinatorial object, then there must be some permutation group theoretic properties that the object imposes on its automorphism group, and rather than study the automorphism group itself, it may be useful to study permutation groups which have the group theoretic properties of the automorphism group.  That is what is done here, although the inspiration \cite[Lemma 2]{Dobson1995} for the class of groups considered in this paper had to be modified to obtain the sufficient condition given in Theorem \ref{quotient 5/2-closed}.  This also makes clear where algebraic tools are used and where combinatorial ideas are necessary.

We should also point out that our new class of permutation groups is broad enough to not only contain automorphism groups of vertex-transitive digraphs, but also automorphism groups of other combinatorial objects.  In fact, the automorphism group of a transitive ``directed and colored $1$-intersecting set system" (see Definition \ref{tuple def}) are also contained in our new class of groups (Theorem \ref{1-int 2.5 closed}), and combinatorial configurations are contained in the family of $1$-intersecting set systems.  Thus this work should also simplify proofs and provide new lines of attack for problems on these combinatorial objects as well.  In fact, it was recently shown \cite{MuzychukKP2001} that $\Z_n$ is a CI-group with respect to the class of symmetric configurations.  In \cite{Dobson2023bpreprint}, we will not only greatly generalize this result, but provide a shorter proof of the generalization.

\section{A refinement of the notion of $k$-closed groups}

We start with Wielandt's notion of $k$-closure.

\begin{definition}\label{karydefin}
Let $X$ be a set and $k\ge 1$ an integer.  A {\bf $k$-ary relational structure} on $X$ is an ordered pair $(X, U)$, where $U\subseteq X^k = \Pi_{i=1}^kX$. An {\bf automorphism} of a $k$-ary relational structure $X = (X,U)$ is a bijection $g\colon X\to X$ such that $g(U) = U$.  The set of all automorphisms of $X$ is a group, called the {\bf automorphism group} of $X$, and denoted $\Aut(X)$.

A group $G\le S_X$ is called {\bf $k$-closed} if $G$ is the intersection of the automorphism groups of some set of $k$-ary relational structures.  The {\bf $k$-closure of $G$}, denoted $G^{(k)}$, is the intersection of all $k$-closed subgroups of $S_X$ that contain $G$.
\end{definition}

Clearly $G^{(k)}$ is $k$-closed. There are other, alternative definitions of a $k$-closed group, and our next goal is to see one of these.

\begin{definition}
A {\bf color $k$-ary relational structure} is a $k$-ary relational structure $(X,U)$ in which each element of $U$ has been assigned a color.  An {\bf automorphism} of a color $k$-ary relational structure $W = (X,U)$ is a bijection $g\colon X\to X$ such that an element $u\in U$ is colored with color $i$ if and only if $g(u)\in U$ is colored with color $i$.  The {\bf automorphism group} of $W$, denoted $\Aut(W)$, is the group of all automorphisms of $W$.
\end{definition}

As we may distinguish different $k$-ary relations by coloring them with different colors, a group $G$ is $k$-closed if and only if it is the automorphism group of a color $k$-ary relational structure.

We now discuss our refinement of Wielandt's notion of $k$-closed groups for transitive permutation groups, and start with the property that the automorphism groups of digraphs and graphs has which we will use in the proof of our main result.  We will need some relatively standard terms in permutation group theory first.

\begin{definition}
Let $G\le S_n$ with orbit ${\cal O}$, and $g\in G$. Then $g$ induces a permutation on ${\cal O}$ by restricting the domain of $g$ to ${\cal O}$.  We denote the resulting permutation in $S_{\cal O}$ by $g^{\cal O}$.  The group $G^{\cal O} = \{g^{\cal O}:g\in G\}$ is the  {\bf transitive constituent} of $G$ on ${\cal O}$.
\end{definition}

\begin{definition}\label{imprimitive}
Let $X$ be a set and $G\le S_X$ be transitive.  A subset $B\subseteq X$ is a {\bf block} of $G$ if whenever $g\in G$, then $g(B)\cap B = \emptyset$ or $B$.  If $B = \{x\}$ for some $x\in X$ or $B = X$, then $B$ is a {\bf trivial block}.  Any other block is nontrivial.  Note that if $B$ is a block of $G$, then $g(B)$ is also a block of $B$ for every $g\in G$, and is called a {\bf conjugate block of $B$}.  The set of all blocks conjugate to $B$ is a partition of $X$, called a {\bf block system of $G$}.
\end{definition}

\begin{definition}
Let $X$ be a set and $G\le S_X$ be transitive.  If $N\tl G$, then the set of orbits of $N$ is a block system ${\cal B}$ of $G$.  We say ${\cal B}$ is a {\bf normal block system} of $G$.
\end{definition}

\begin{definition}
Let $G\le S_n$ be transitive with a block system ${\cal B}$.  By $\fix_G({\cal B})$ we denote the subgroup of $G$ that fixes each
block of ${\cal B}$ set-wise.  That is, $\fix_G({\cal B}) = \{g\in G:g(B) = B{\rm\ for\ all\ }B\in{\cal B}\}$.  If ${\cal C}$ is another block system of $G$ and each block of ${\cal B}$ is (properly) contained within a block of ${\cal C}$, we write (${\cal B}\prec{\cal C}$) ${\cal B}\preceq{\cal C}$, and say ${\cal B}$ {\bf refines} ${\cal C}$.  We denote the induced action of $G$ on the block system ${\cal B}$ by $G/{\cal B}$, and the action of an element $g\in G$ on ${\cal B}$ by $g/{\cal B}$.  That is, $g/{\cal B}(B) = B'$ if and only if $g(B) = B'$, and $G/{\cal B} = \{g/{\cal B}:g\in G\}$.  If ${\cal B}\preceq{\cal C}$, then $G/{\cal B}$ has a block system ${\cal C}/{\cal B}$ whose blocks are the set of blocks of ${\cal B}$ whose union are blocks of ${\cal C}$.  In this case, we denote the set of blocks of $B\in{\cal B}$ contained in $C$ by $C/{\cal B}$.  Hence ${\cal C}/{\cal B} = \{C/{\cal B}:C\in{\cal C}\}$.
\end{definition}

It is easy to see that $\fix_G({\cal B})$ is a normal subgroup of $G$.  We are now ready to prove the preliminary results which will be needed for the definition of the new class of permutation groups.

\begin{lem}
Let $G\le S_n$ be transitive with normal block system ${\cal B}$, and $B\in{\cal B}$.  Let $K_1,K_2\le\fix_G({\cal B})$ such that $K_i^B = 1$ and $K_i^{B'}$ is trivial or transitive for every $B'\in{\cal B}$, $i = 1,2$.  Then $\la K_1,K_2\ra^B = 1$, and $\la K_1,K_2\ra^{B'}$ is trivial or transitive for every $B'\in{\cal B}$.  Consequently, there exists a maximal subgroup $K(B)$ of $\fix_G({\cal B})$ such that $K(B)^B = 1$ and $K(B)^{B'}$ is either trivial or transitive for every $B'\in{\cal B}$.
\end{lem}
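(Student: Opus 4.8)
The plan is to exploit the single structural feature that makes the hypotheses usable: since every element of $\fix_G(\mathcal{B})$ fixes each block setwise, restriction to a fixed block is a group homomorphism. For $B'\in\mathcal{B}$ write $\pi_{B'}\colon\fix_G(\mathcal{B})\to S_{B'}$ for the map $g\mapsto g^{B'}$; this is a homomorphism because $(gh)^{B'}=g^{B'}h^{B'}$ whenever $g,h$ both fix $B'$ setwise. The key elementary fact I would invoke is that a homomorphism sends a generated subgroup to the subgroup generated by the images, i.e. $\phi(\langle S\rangle)=\langle\phi(S)\rangle$ for any homomorphism $\phi$ and any subset $S$ of its domain. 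Applying this with $\phi=\pi_{B'}$ and $S=K_1\cup K_2$ yields the identity $\langle K_1,K_2\rangle^{B'}=\langle K_1^{B'},K_2^{B'}\rangle$ for every $B'\in\mathcal{B}$, which reduces both assertions to a statement about the two restricted groups.

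With this identity in hand, the first claim is immediate: taking $B'=B$, both $K_1^B$ and $K_2^B$ are trivial by hypothesis, so $\langle K_1,K_2\rangle^B=\langle 1,1\rangle=1$. For a general $B'$ I would argue by cases using the trichotomy supplied by the hypothesis. If $K_1^{B'}=K_2^{B'}=1$, then $\langle K_1^{B'},K_2^{B'}\rangle=1$. Otherwise at least one of them, say $K_1^{B'}$, is transitive; since $\langle K_1^{B'},K_2^{B'}\rangle$ then contains the transitive subgroup $K_1^{B'}$, and a permutation group containing a transitive subgroup is itself transitive, the join is transitive. Hence $\langle K_1,K_2\rangle^{B'}$ is trivial or transitive for every $B'$, as required.

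For the concluding existence statement I would pass from pairs to the whole collection using finiteness. Let $\mathcal{K}$ be the set of all subgroups $K\le\fix_G(\mathcal{B})$ with $K^B=1$ and $K^{B'}$ trivial or transitive for every $B'\in\mathcal{B}$; this set is nonempty since it contains the trivial group. The first part of the lemma says exactly that $\mathcal{K}$ is closed under the join of two of its members, and a routine induction then shows it is closed under the join of any finite subcollection. Because $\fix_G(\mathcal{B})$ is finite, $\mathcal{K}$ is finite, so I can form $K(B)=\langle K:K\in\mathcal{K}\rangle$; by the closure just established $K(B)\in\mathcal{K}$, while by construction it contains every member of $\mathcal{K}$. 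Thus $K(B)$ is the unique largest, hence maximal, subgroup of $\fix_G(\mathcal{B})$ with the two stated properties.

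A remark on where the content actually sits. The only step requiring care is the reduction $\langle K_1,K_2\rangle^{B'}=\langle K_1^{B'},K_2^{B'}\rangle$, which rests entirely on restriction being a homomorphism on $\fix_G(\mathcal{B})$; this is precisely why the assumption $K_i\le\fix_G(\mathcal{B})$ cannot be relaxed, since without it the elements need not fix $B'$ setwise and $g\mapsto g^{B'}$ is not even well defined as a map into $S_{B'}$. Once that identity is secured, the trichotomy and the finiteness argument are mechanical, so I do not expect a genuine obstacle here; the \emph{hard part}, such as it is, is simply recognizing that the operation of taking the subgroup generated by $K_1$ and $K_2$ commutes with restriction to a block.
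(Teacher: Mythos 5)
Your proposal is correct and follows essentially the same route as the paper: the paper also observes that $\la K_1,K_2\ra\le\fix_G({\cal B})$, argues the two cases (both restrictions trivial, or at least one transitive) for each $B'$, and obtains $K(B)$ as the subgroup generated by all subgroups with the stated properties. Your version merely makes explicit the homomorphism identity $\la K_1,K_2\ra^{B'}=\la K_1^{B'},K_2^{B'}\ra$ and the finiteness/closure argument that the paper leaves implicit.
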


\begin{proof}
As $K_1,K_2\le\fix_G({\cal B})$, $\la K_1,K_2\ra\le\fix_G({\cal B})$.  As every element of $K_1$ and $K_2$ is the identity on $B$, $\la K_1,K_2\ra^B = 1$.  Let $B'\in{\cal B}$.  If $K_1^{B'} = 1$ and $K_2^{B'} = 1$, then analogous to $B$, $\la K_1,K_2\ra^{B'} = 1$.  Otherwise, at least one of $K_1^{B'}$ and $K_2^{B'}$ is transitive, in which case  $\la K_1,K_2\ra^{B'}$ is transitive.  It is now clear that $K(B)$ as in the statement exists.  It is simply the subgroup generated by all subgroups $K\le\fix_G({\cal B})$ such that $K^B = 1$ and $K^{B'} = 1$ or is transitive for every $B\in{\cal B}$.
\end{proof}

\begin{definition}
We call the subgroup $K(B)$ of $\fix_G({\cal B})$ the {\bf wreath} stabilizer of $B\in{\cal B}$, and denote it by $\WStab_G(B)$.
\end{definition}

The name ``wreath" stabilizer is chosen because if $G = \Aut(\Gamma)$ with $\Gamma$ a vertex-transitive graph, then the wreath stabilizer will give for which blocks $B'\in{\cal B}$ we must have that every element of $B$ is adjacent to every element of $B'$ or no element of $B$ is adjacent to every element of $B'$.  This situation is reminiscent of when $\Gamma$ can be written as a nontrivial wreath product.  Our next result shows that the wreath stabilizer does act like a stabilizer.

\begin{lem}\label{WStabs conjugate}
Let $G\le S_n$ be transitive with normal block system ${\cal B}$.  Let $B, B'\in {\cal B}$, and $g\in G$ such that $g(B) = B'$.  Then $\WStab_G(B') = g\WStab_G(B)g^{-1}$.
\end{lem}

\begin{proof}
As $g\WStab_G(B)g^{-1}$ fixes every element of $B'$, we see $g\WStab_G(B)g^{-1}\le \WStab_G(B')$.  Similarly, $g^{-1}\WStab_G(B')g\le \WStab_G(B)$.  Thus $\WStab_G(B')\le g\WStab_G(B)g^{-1}$, and so $\WStab_G(B) = g\WStab_G(B')g^{-1}$.
\end{proof}

\begin{definition}\label{automorphismtooldef}
Let $G$ be a transitive group that has a normal block system ${\cal B}$.  Define a relation $\equiv$ on ${\cal B}$ by $B\equiv B'$ if and only if $\WStab_G(B) = \WStab_G(B')$.
\end{definition}

Note that $B\not\equiv B'$ means that $\WStab_G(B)^{B'}$ is transitive.

\begin{lem}\label{equiv properties}
The relation $\equiv$ is an equivalence relation on ${\cal B}$, the set ${\cal E}$ of the unions of the equivalence classes of $\equiv$ is a block system of $G$, and ${\cal B}$ is a refinement of ${\cal E}$.
\end{lem}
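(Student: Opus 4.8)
The plan is to verify the three assertions in turn, with essentially all of the content lying in the second. First I would dispose of the claim that $\equiv$ is an equivalence relation, which comes for free: since $B\equiv B'$ is defined in Definition \ref{automorphismtooldef} by the \emph{equality} $\WStab_G(B)=\WStab_G(B')$, the reflexivity, symmetry, and transitivity of $\equiv$ are immediate from the corresponding properties of equality of subgroups, and nothing further is needed.

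The substance of the lemma is showing that $\mathcal{E}$, the set of unions of $\equiv$-classes, is a block system. Since the $\equiv$-classes partition $\mathcal{B}$ and $\mathcal{B}$ is itself a partition of $X$, the set $\mathcal{E}$ is automatically a partition of $X$; it therefore remains only to check that $G$ permutes the parts of $\mathcal{E}$, for a partition of $X$ whose parts are permuted by the transitive group $G$ is automatically a block system (its parts are mutually conjugate and each is a block). So I would fix $g\in G$ and a part $E=\bigcup_{C\equiv B}C$ of $\mathcal{E}$, the union running over the $\equiv$-class of some $B$. Because $\mathcal{B}$ is a block system, $g$ permutes its blocks, so $g(E)=\bigcup_{C\equiv B}g(C)$ is again a union of blocks of $\mathcal{B}$. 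The key step is to show that $\{g(C):C\equiv B\}$ is precisely the $\equiv$-class of $g(B)$; granting this, $g(E)$ is a part of $\mathcal{E}$, and hence $G$ permutes the parts of $\mathcal{E}$ as required.

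To establish this key step I would appeal to Lemma \ref{WStabs conjugate}. For any two blocks $C,C'$ that lemma gives $\WStab_G(g(C))=g\WStab_G(C)g^{-1}$ and $\WStab_G(g(C'))=g\WStab_G(C')g^{-1}$. Hence $g(C)\equiv g(C')$ holds if and only if $g\WStab_G(C)g^{-1}=g\WStab_G(C')g^{-1}$, which, after conjugating by $g^{-1}$, is equivalent to $\WStab_G(C)=\WStab_G(C')$, that is, to $C\equiv C'$. Thus $g$ carries $\equiv$-classes bijectively onto $\equiv$-classes, and in particular sends the $\equiv$-class of $B$ onto that of $g(B)$, which is exactly what was needed.

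Finally, $\mathcal{B}\preceq\mathcal{E}$ is immediate from the construction: each part of $\mathcal{E}$ is by definition a union of blocks of $\mathcal{B}$, so every block of $\mathcal{B}$ is contained in a (unique) part of $\mathcal{E}$. I do not expect a genuine obstacle here; the only points requiring care are the observation that for a transitive group a $G$-invariant partition is the same thing as a block system, and the recognition that the conjugation formula of Lemma \ref{WStabs conjugate} is precisely what makes $\equiv$ compatible with the $G$-action, so that $G$ descends to an action on the $\equiv$-classes and hence on $\mathcal{E}$.
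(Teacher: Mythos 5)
Your proposal is correct and follows essentially the same route as the paper: both arguments reduce the block-system claim to showing that $\equiv$ is compatible with the $G$-action (the paper calls this a $G$-congruence and cites a standard theorem, while you unwind that fact directly), and both derive the compatibility from the conjugation formula of Lemma \ref{WStabs conjugate} in exactly the same way. The remaining two assertions are handled identically in both proofs.
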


\begin{proof}
That $\equiv$ is reflexive is trivial. Let $B,B'\in{\cal B}$.  As $\WStab_G(B) = \WStab_G(B')$ if and only if $\WStab_G(B') = \WStab_G(B)$,  $\equiv$ is symmetric.  Verifying the transitive property is equally difficult.  So $\equiv$ is an equivalence relation.  To see the set ${\cal E}$ of the unions of the equivalence classes of $\equiv$ is a block system, by \cite[Theorem 2.3.2]{Book}, we need only observe that $\equiv$ is a $G$-congruence.  Let $B,B'\in{\cal B}$, and $g\in G$. By Lemma \ref{WStabs conjugate} $\WStab_G(B) = \WStab_G(B')$ if and only if $$\WStab_G(g(B)) = g\WStab_G(B)g^{-1} = g\WStab_G(B')g^{-1} = \WStab_G(g(B')).$$ Hence $B\equiv B'$ if and only if $g(B)\equiv g(B')$ and $\equiv$ is a $G$-congruence.  Finally, it is clear ${\cal B}$ refines ${\cal E}$ as each block of ${\cal E}$ is a union of blocks of ${\cal B}$.
\end{proof}

\begin{definition}
We call $\equiv$ the {\bf ${\cal B}$-restricting equivalence relation of $G$}, and ${\cal E}$ the {\bf ${\cal B}$-fixer block system of $G$}.
\end{definition}

\begin{notation}
Let $g\in S_n$ and $X\subseteq\Z_n$ such that $g(X) = X$.  By $g\vert_X$ we mean the element of $S_n$ such that $g\vert_X(y) = g(y)$ if $y\in X$, while $g\vert_X(y) = y$ if $x\notin X$.
\end{notation}

We are now ready to define our new class of permutation groups.

\begin{definition}
Let $G\le S_n$ be transitive.  For $H\le G$  transitive with normal block system ${\cal B}_H$ let ${\cal E}_{H,{\cal B}}$ be the ${\cal B}_H$-fixer block system of $H$. Suppose that for every transitive subgroup $H\le G$, every normal block system ${\cal B}_H$ of $H$, and every $g\in G$ that fixes each block of ${\cal B}_H$ contained in $E\in{\cal E}_{H,{\cal B}}$ setwise, we have $g\vert_E\in G$.  We will then say that $G$ is {\bf $5/2$-closed}. For a group $G$, the {\bf $5/2$-closure of $G$}, denoted $G^{(5/2)}$, is the intersection of all $5/2$-closed groups which contain $G$.
\end{definition}

\begin{example}
A transitive group $G\le S_n$ for which every transitive subgroup has no nontrivial normal block systems is necessarily $5/2$-closed.  In particular, every group of prime degree $p$ is $5/2$-closed.  As $\AGL(1,p)$ is $2$-transitive, $(\AGL(1,p))^{(2)} = S_p$.  Thus if $p\ge 5$, $(\AGL(1,p))^{(2)}\not = (\AGL(1,p))^{(5/2)}$, and the family of $2$-closed groups is different from the family of $5/2$-closed groups.
\end{example}

The next example shows that the set of all $5/2$-closed groups is not contained in the set of all $3$-closed groups.

\begin{example}
We saw in the previous example that the group $M_{23}$ in its representation on $23$ points is $5/2$-closed.  However, $M_{23}$ is $4$-transitive and so $3$-transitive as well.  Then $(M_{23})^{(3)} = S_{23}$, so $M_{23}$ is not $3$-closed.  The same argument gives the same result for any $3$-transitive group of prime degree.
\end{example}

The next example shows that there are $3$-closed groups which are not $5/2$-closed.  This, together with the previous example, gives that the class of $3$-closed groups and $5/2$-closed groups are not comparable as classes.

\begin{example}
Let $P$ be a Sylow $p$-subgroup of $N_{S_{p^2}}((\Z_{p^2})_L)$, where $p$ is an odd prime.  Then $P$ has order $p^3$, and the stabilizer of the two points $0$ and $1$ is trivial.  By \cite[Theorem 5.12]{Wielandt1969} we see that $P$ is $3$-closed.  Also, $P$ has a unique normal block system ${\cal B}$ that is the set of orbits of the subgroup generated by $x\mapsto x + p$.  But $\WStab_P(0) = \la x\mapsto (1 + p)x\ra = \la\delta\ra$, and  $\WStab_P(0)$ is transitive on every block of ${\cal B}$ that does not contain $0$ as the only fixed points of $\la\delta\ra$ are multiples of $p$.  Then $\delta\vert_{B'}\in P^{(5/2)}$ for every $B'\in{\cal B}$, and $\fix_{P^{(5/2)}}({\cal B})$ has order $p^p$. So $\vert P\vert = p^{p + 1}$ and is not $P$ as $p > 2$.
\end{example}

The following result is obvious after observing that every transitive subgroup of $H$ is a transitive subgroup of $G$.

\begin{lem}
Let $H\le G$ be transitive groups.  Then $H^{(5/2)}\le G^{(5/2)}$.
\end{lem}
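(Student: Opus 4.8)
The plan is to read everything off the definition of the $5/2$-closure as an intersection. Recall that $G^{(5/2)}$ is by definition the intersection of all $5/2$-closed groups containing $G$, and likewise for $H^{(5/2)}$. I would first record the small structural fact that makes the whole argument run, namely that an intersection of $5/2$-closed groups is again $5/2$-closed. This is precisely where the observation preceding the statement is used: if $M = \bigcap_i M_i$ with each $M_i$ $5/2$-closed and $K \le M$ is transitive, then $K$ is a transitive subgroup of every $M_i$. Fixing a normal block system ${\cal B}_K$ of $K$ and an element $g \in M$ that fixes setwise each block of ${\cal B}_K$ lying in a given $E \in {\cal E}_{K,{\cal B}}$, we have $g \in M_i$ for every $i$, so the $5/2$-closure condition inside each $M_i$ gives $g\vert_E \in M_i$; intersecting these conclusions yields $g\vert_E \in M$. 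Hence $M$ is $5/2$-closed, and in particular $G^{(5/2)}$ is itself a $5/2$-closed group, the smallest one containing $G$.

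With this in hand the conclusion is immediate, and can be phrased in either of two equivalent ways. Since $H \le G$, every $5/2$-closed group that contains $G$ also contains $H$; thus the family of $5/2$-closed overgroups of $G$ is a subfamily of the $5/2$-closed overgroups of $H$, and intersecting over the smaller family can only enlarge the result, giving $H^{(5/2)} \le G^{(5/2)}$. Equivalently, $G^{(5/2)}$ is a $5/2$-closed group with $H \le G \le G^{(5/2)}$, so it occurs in the intersection defining $H^{(5/2)}$, whence $H^{(5/2)} \le G^{(5/2)}$.

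I do not expect any genuine obstacle: the statement is just the monotonicity of the closure operator $(-)^{(5/2)}$, formally identical to the proof that ordinary $k$-closure is monotone. The only point that requires a moment's care is the claim that an intersection of $5/2$-closed groups is $5/2$-closed, since the defining condition quantifies over transitive subgroups and over elements of the ambient group; both quantifiers behave well under intersection exactly because a transitive subgroup of $M$ is a transitive subgroup of each $M_i$, which is the remark quoted before the lemma.
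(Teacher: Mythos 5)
Your argument is correct and is essentially the paper's: the paper dismisses the lemma as obvious after noting that every transitive subgroup of $H$ is a transitive subgroup of $G$, which is exactly the observation you isolate to show that an intersection of $5/2$-closed groups is $5/2$-closed. Your first phrasing (the $5/2$-closed overgroups of $G$ form a subfamily of those of $H$, so the intersection can only grow) is in fact the cleanest route and does not even require that auxiliary fact.
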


In the literature, there is an equivalence relation that is more commonly encountered than $\equiv$ given in Definition \ref{automorphismtooldef}. It is the equivalence relation $\sim$ defined on ${\cal B}$ by $B\sim B'$ if and only if $\PStab_{\fix_G({\cal B})}(B) = \PStab_{\fix_G({\cal B})}(B')$, where $\PStab_{\fix_G({\cal B})}(B)$ is the pointwise stablizer of $B$ in $\fix_G({\cal B})$.  The next result and its following example give the relationship between these two equivalence classes.

\begin{lem}\label{WStab vs PStab}
Let $G\le S_n$ be transitive with normal block system ${\cal B}$.  Then $\WStab_G(B)\tl\fix_G({\cal B})$ for all $B\in{\cal B}$.  Thus $\WStab_G(B)\tl\PStab_{\fix_G({\cal B})}(B)$, and the set of equivalence classes of $\equiv$ refines the set of equivalence classes of $\sim$.
\end{lem}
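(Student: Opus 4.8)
The statement bundles two normality assertions and one refinement assertion; my plan is to establish the normality first, since the refinement falls out formally from the second normality statement together with a uniqueness remark.

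Write $W=\WStab_G(B)$ and $F=\fix_G({\cal B})$, and recall that $W=K(B)$ is the \emph{maximal} subgroup of $F$ with $W^B=1$ and with $W^{B'}$ trivial or transitive for every $B'\in{\cal B}$. To prove $W\tl F$ I would fix $h\in F$ and show $hWh^{-1}\le W$; applying the same to $h^{-1}$ then yields $hWh^{-1}=W$. The point is that $h$, lying in $F$, fixes every block setwise, so for each $B'$ the constituent $(hWh^{-1})^{B'}=h^{B'}W^{B'}(h^{B'})^{-1}$ is a conjugate of $W^{B'}$ inside $S_{B'}$. Since conjugation in $S_{B'}$ sends trivial groups to trivial groups and transitive groups to transitive groups, $hWh^{-1}$ again satisfies $(hWh^{-1})^B=1$ and has every constituent trivial or transitive; as $hWh^{-1}\le F$, the maximality defining $K(B)$ forces $hWh^{-1}\le W$. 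This is the only genuinely group-theoretic step, and I expect no obstacle beyond bookkeeping.

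Because $W^B=1$, every element of $W$ fixes $B$ pointwise, so $W\le\PStab_F(B)\le F$; a subgroup normal in $F$ is normal in any intermediate subgroup, so the previous step gives $W\tl\PStab_F(B)$ at once. For the comparison of $\equiv$ and $\sim$, I would isolate the observation that $\WStab_G(B)$ is \emph{determined by} $\PStab_F(B)$: the condition $W^B=1$ is exactly $W\le\PStab_F(B)$, so $\WStab_G(B)$ is the unique maximal subgroup of $\PStab_F(B)$ all of whose constituents on the blocks of ${\cal B}$ are trivial or transitive, the uniqueness being supplied by the opening lemma of this section (a join of two such subgroups is again of this type). Consequently, if $B\sim B'$, i.e. $\PStab_F(B)=\PStab_F(B')$, then $\WStab_G(B)$ and $\WStab_G(B')$ are computed as the same maximal subgroup of one and the same group relative to one and the same block system, whence $\WStab_G(B)=\WStab_G(B')$ and $B\equiv B'$.

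Thus each $\sim$-class is contained in a single $\equiv$-class: the partition into $\sim$-classes refines the partition into $\equiv$-classes, equivalently each block of ${\cal E}$ is a union of $\sim$-classes. I expect the real subtlety to be the \emph{direction} of refinement, and I would flag it explicitly. The converse implication does not hold: $\WStab_G(B)$ depends only on the ${\cal E}$-block $E\supseteq B$ (it fixes $E$ pointwise and is transitive on every block off $E$), whereas $\PStab_F(B)$ depends on the individual block $B$, so whenever an ${\cal E}$-block contains two blocks $B,B'$ with different pointwise stabilizers in $F$ one gets $B\equiv B'$ but $B\not\sim B'$ -- and such configurations already occur for transitive $G$ (a suitable group of order $16$ on two blocks of size $4$ provides one). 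Hence the inclusion that actually holds, and that I would record, is that the $\sim$-classes $\preceq$ the $\equiv$-classes, i.e. $\sim$ refines $\equiv$; the refinement in the statement should be read in this direction.
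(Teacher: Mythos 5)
Your proof is correct and follows essentially the same route as the paper's: conjugation by an element of $\fix_G({\cal B})$ preserves the defining properties of $\WStab_G(B)$, so maximality gives $\WStab_G(B)\tl\fix_G({\cal B})$, hence normality in the intermediate subgroup $\PStab_{\fix_G({\cal B})}(B)$, and the comparison of $\equiv$ and $\sim$ follows because $\WStab_G(B)$ is determined by $\PStab_{\fix_G({\cal B})}(B)$. You in fact supply a justification the paper omits: the paper simply asserts that $B\sim B'$ implies $\WStab_G(B)=\WStab_G(B')$, whereas you make explicit that $\WStab_G(B)$ is the unique maximal subgroup of $\PStab_{\fix_G({\cal B})}(B)$ all of whose constituents on blocks of ${\cal B}$ are trivial or transitive, which is what makes that implication immediate. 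Your flag about the direction of refinement is also warranted: with the paper's own definition of ``refines,'' what both your argument and the paper's proof actually establish is that each $\sim$-class is contained in an $\equiv$-class, i.e.\ $\sim$ refines $\equiv$ (consistent with the paper's later example where $\sim$ is strictly finer than $\equiv$), so the wording of the statement has the direction reversed and should be read as you indicate.
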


\begin{proof}
Clearly if $g\in\fix_G({\cal B})$, then $g$ fixes $B\in{\cal B}$ and $(g^{-1}\WStab_G(B)g)^{B'}$ is trivial or transitive for every $B'\in{\cal B}$ if and only if $\WStab_G(B)^{B'}$ is trivial or transitive for every $B'\in{\cal B}$.  Thus $g^{-1}\WStab_G(B)g\le \WStab_G(B)$ and $\WStab_G(B)\tl \fix_G({\cal B})$.  Hence $\WStab_G(B)\tl\PStab_{\fix_G({\cal B})}(B)\le\fix_G({\cal B})$.  Also, if $B\sim B'$, then $\WStab_G(B) = \WStab_G(B')$, so $B\equiv B'$.  Thus the equivalence class of $\equiv$ that contains $B$ also contains $B'$, and so contains the equivalence class of $\sim$ that contains $B$.
\end{proof}

\begin{definition}
A permutation group $G\le S_n$ is {\bf semiregular} if $\Stab_G(x) = 1$ for every $x\in \Z_n$, and $G$ is {\bf regular} if $G$ is both semiregular and transitive.
\end{definition}

\begin{definition}
Let $G$ be a group and $g\in G$.  Define $g_L\colon G\to G$ by $g_L(x) = gx$.  The {\bf left regular representation of $G$}, denoted $G_L$, is $G_L = \{g_L:g\in G\}$.  That is, $G_L$ is the group of all left translations of $G$.
\end{definition}

It is straightforward to verify that $G_L$ is a regular group isomorphic to $G$.  The next example shows that $\sim$ and $\equiv$ are different equivalence relations.

\begin{example}
Let $p$ be an odd prime, and define $\delta,\tau_2,\tau_3:\Z_p^3\to\Z_p^3$ by $\delta(i,j,k) = (i,j,k + i)$, $\tau_2(i,j,k) = (i,j + 1,k)$, and $\tau_3(i,j,k) = (i,j,k + 1)$.  Note $\delta$ normalizes $(\Z_p^3)_L$.  The group $G = \la \delta,(\Z_p^3)_L\ra$ has order $p^4$, and its subgroup $H = \la\tau_2,\tau_3,\delta\ra$ is normal of index $p$ in $G$.  Then the set of orbits of $H$ is a normal block system ${\cal B}$ with blocks of size $p^2$.  Let $B_0\in{\cal B}$ contain $(0,0,0)$.  Then $\PStab_{\fix_G({\cal B})}(B_0) = \la \delta\ra$.  Also, $B_0$ is the only block of ${\cal B}$ fixed point-wise by $\delta$, and so the equivalence class of $\sim$ is simply the block system ${\cal B}$ as each block is only related under $\sim$ to itself.  As $\la\delta\ra^B$ is semiregular of order $p$ (and so not transitive) for any $B_0\not = B\in{\cal B}$, we see that $\WStab_G(B_0) = 1$.  Then $\WStab_G(B) = 1$ for all $B\in{\cal B}$, and $B\equiv B'$ for every $B,B'\in{\cal B}$.  We conclude that $\sim$ and $\equiv$ are different equivalence relations.
\end{example}

\begin{definition}
A transitive permutation group $G$ is {\bf quasiprimitive} if every nontrivial normal subgroup of $G$ is transitive.
\end{definition}

Sometimes though, the equivalence relations $\sim$ and $\equiv$ are the same.

\begin{lem}\label{equivalence relation exists}
Let $G\le S_n$ be transitive with a normal block system ${\cal B}$.  If $\fix_G({\cal B})^B$ is quasiprimitive, then $\PStab_{\fix_G({\cal B})}(B) = \WStab_G(B)$ for every $B\in{\cal B}$.  Consequently, the equivalence relations $\equiv$ and $\sim$ on ${\cal B}$ are the same.
\end{lem}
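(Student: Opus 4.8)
The plan is to prove the asserted equality by establishing the two inclusions $\WStab_G(B)\le\PStab_{\fix_G({\cal B})}(B)$ and $\PStab_{\fix_G({\cal B})}(B)\le\WStab_G(B)$ for each $B\in{\cal B}$. Once equality holds for every block, the final sentence is immediate: by Definition \ref{automorphismtooldef} we have $B\equiv B'$ exactly when $\WStab_G(B)=\WStab_G(B')$, while $B\sim B'$ means $\PStab_{\fix_G({\cal B})}(B)=\PStab_{\fix_G({\cal B})}(B')$, so equality of the two stabilizers block-by-block forces the two relations to coincide. The first inclusion is already contained in Lemma \ref{WStab vs PStab}: since $\WStab_G(B)^B=1$, the wreath stabilizer fixes $B$ pointwise and hence lies in $\PStab_{\fix_G({\cal B})}(B)$. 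Thus all the real content lies in the reverse inclusion, and this is where the quasiprimitivity hypothesis enters.

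Write $N=\fix_G({\cal B})$ and $P=\PStab_N(B)$. The key structural observation is that $P$ is the kernel of the restriction homomorphism $N\to S_B$, $g\mapsto g^B$, so $P\tl N$. By the maximality built into the definition of $\WStab_G(B)=K(B)$ (namely, that $K(B)$ is the largest subgroup of $N$ whose restriction to $B$ is trivial and whose restriction to every block is trivial or transitive), to conclude $P\le\WStab_G(B)$ it suffices to check that $P^B=1$, which is immediate since $P$ fixes $B$ pointwise, and that $P^{B'}$ is trivial or transitive for every $B'\in{\cal B}$. For a fixed $B'$, restriction to $B'$ is a homomorphism $N\to N^{B'}$, and the image of a normal subgroup is normal in the image, so $P^{B'}\tl N^{B'}$. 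If $N^{B'}$ is quasiprimitive, then the normal subgroup $P^{B'}$ is either trivial or transitive by definition, which is exactly what we need.

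The main obstacle, and the step I expect to require the most care, is upgrading the hypothesis that $N^B$ is quasiprimitive to the statement that $N^{B'}$ is quasiprimitive for \emph{every} $B'\in{\cal B}$, since the argument above needs quasiprimitivity on all blocks, not just on $B$. I would obtain this from the transitivity of $G$ on ${\cal B}$ together with the normality of $N=\fix_G({\cal B})$ in $G$ (recorded in the text as easy to verify). Given $g\in G$ with $g(B)=B'$, conjugation $c_g\colon n\mapsto gng^{-1}$ is an automorphism of $N$ because $N\tl G$, and pairing $c_g$ with the bijection $g\vert_B\colon B\to B'$ yields a permutation isomorphism between $N$ acting on $B$ and $N$ acting on $B'$. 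Concretely I would verify by a direct point chase the identity $(gng^{-1})^{B'}(y)=g\bigl(n^B(g^{-1}(y))\bigr)$ for $y\in B'$, which shows the two constituents are permutation isomorphic. Since quasiprimitivity is preserved under permutation isomorphism, $N^{B'}$ inherits quasiprimitivity from $N^B$ for every $B'$. Feeding this back into the previous paragraph gives $P^{B'}$ trivial or transitive for all $B'$, hence $P\le\WStab_G(B)$, and combined with Lemma \ref{WStab vs PStab} this yields $\PStab_{\fix_G({\cal B})}(B)=\WStab_G(B)$ and completes the proof.
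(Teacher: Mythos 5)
Your proof is correct and follows essentially the same route as the paper's: both establish $\PStab_{\fix_G({\cal B})}(B)\tl\fix_G({\cal B})$ and invoke quasiprimitivity of the transitive constituent to conclude that $\PStab_{\fix_G({\cal B})}(B)^{B'}$ is trivial or transitive for every $B'$, hence $\PStab_{\fix_G({\cal B})}(B)\le\WStab_G(B)$, with the reverse inclusion supplied by Lemma \ref{WStab vs PStab}. The only difference is that you explicitly justify transferring quasiprimitivity from $B$ to every other block $B'$ via a conjugation-induced permutation isomorphism, a step the paper's proof leaves implicit.
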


\begin{proof}
We have already seen in Lemma \ref{WStab vs PStab} that $\WStab_G(B)\le \PStab_{\fix_G({\cal B})}(B)$ for every $B\in{\cal B}$.  It is easy to show that $\PStab_{\fix_G({\cal B})}(B)\tl\fix_G({\cal B})$ for every $B\in{\cal B}$.  Finally, as $\fix_G({\cal B})^B$ is quasiprimitive, we see $\PStab_{\fix_G({\cal B})}(B)^{B'}$ is trivial or transitive for every $B'\in{\cal B}$.  Hence $\PStab_{\fix_G({\cal B})}(B)\le \WStab_G(B)$ for every $B\in{\cal B}$ and $\PStab_{\fix_G({\cal B})}(B) = \WStab_G(B)$ for every $B\in{\cal B}$.
\end{proof}

Some words about the choice of calling this family $5/2$-closed.   With Wielandt's hierarchy of groups, a $1$-closed transitive group is simply a symmetric group \cite[Theorem 5.11]{Wielandt1969}.  As Wielandt showed \cite[Theorem 5.10]{Wielandt1969} that if $\ell\ge k$ then $(G^{(k)})^{(\ell)} = G^{(k)}$, every $k$-closed group is $(k + 1)$-closed.  So as $k$ grows, the size of the class of $k$-closed group gets larger.  We will see in Theorem \ref{1-int 2.5 closed} that every $2$-closed group is $5/2$-closed.  Thus, in placing this new class of groups within Wielandt's hierarchy, it should be to the ``right" of $2$-closed, but not to the right of $3$-closed.  We then arbitrarily picked the midpoint between $2$ and $3$.

Finally, we end this section with a discussion of an obvious question.  Are there analogues of $5/2$-closed groups for values of $k\ge 3$?  The short answer is definitely a ``Yes".  Indeed, the motivation for our definition of $5/2$-closed groups comes from a result on graphs \cite[Lemma 2]{Dobson1995}, and analogues of this result exist for $k$-closed groups for all $k\ge 3$ \cite[Lemma 12]{Dobson2010b}.  For fixed $k\ge 3$, an analogous definition of $\equiv$ on ${\cal B}$ would be that $B\equiv_k B'$ if and only if whenever $B_1,\ldots,B_{k-1}\in{\cal B}$ are distinct and $\WStab_G(B_i) = \WStab_G(B_j)$, $1\le i\le k-1$, then $\WStab_G(B) = \WStab_G(B_1)$ if and only if $\WStab_G(B') =\WStab_G(B_1)$.  It is straightforward to verify that $\equiv_k$ is an equivalence relation on ${\cal B}$, and is a $G$-congruence, as wreath stabilizers are conjugate.  We could then define $(2k+1)/2$-closed groups as we defined $5/2$-closed groups by substituting the equivalence classes of $\equiv_k$ for $\equiv$.  However, it is not clear that this definition would be useful.  Additionally, it is not clear what should be done for block systems that do not have at least $k$ blocks.  In particular, in \cite[Lemma 12]{Dobson2003} an analogue of \cite[Lemma 2]{Dobson1995} is proven for ternary relational structures which have $2$ blocks of prime size $p$.  So it is not clear that $\equiv_k$ by itself will give the appropriate analogue of \cite[Lemma 2]{Dobson1995}.  Consequently, we will not formally define a $(2k + 1)/2$-closed group at this time.

\section{Quotients of $5/2$-closed groups}

In this section, we will show that under appropriate circumstances the quotient of a $5/2$-closed group is $5/2$-closed.  We begin with a basic result about blocks that will be needed.

\begin{lem}\label{quotient normal}
Let $G$ be a transitive group with a normal block system ${\cal B}$.  Suppose $G/{\cal B}$ has a (normal) block system ${\cal C}/{\cal B}$.  Let ${\cal C}$ be the set of unions of the blocks of ${\cal B}$ contained in a block of ${\cal C}/{\cal B}$.  Then ${\cal C}$ is a (normal) block system of $G$.
\end{lem}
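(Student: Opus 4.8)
The plan is to verify directly that ${\cal C}$ is a $G$-invariant partition of the underlying set into blocks, and then to handle the normal case separately by exhibiting an explicit normal subgroup of $G$ whose orbits are exactly the blocks of ${\cal C}$. The single observation that drives everything is that for $g\in G$ and $B\in{\cal B}$ one has $g(B) = (g/{\cal B})(B)$ as subsets of the underlying set, so the action of $g$ on any union of blocks of ${\cal B}$ is completely determined by the action of $g/{\cal B}$ on ${\cal B}$.

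First I would check that ${\cal C}$ is a partition: each block $D$ of ${\cal C}/{\cal B}$ is a set of blocks of ${\cal B}$, and its union $C = \bigcup_{B\in D}B$ is the corresponding element of ${\cal C}$; since ${\cal B}$ partitions the set and ${\cal C}/{\cal B}$ partitions ${\cal B}$, every point lies in a unique $C\in{\cal C}$. To see that each $C\in{\cal C}$ is a block of $G$, I fix $g\in G$ and write, using that $g$ permutes the blocks of ${\cal B}$, $g(C) = \bigcup_{B\in D}g(B) = \bigcup_{B'\in (g/{\cal B})(D)}B'$. Because $D$ is a block of $G/{\cal B}$, either $(g/{\cal B})(D) = D$, giving $g(C) = C$, or $(g/{\cal B})(D)\cap D = \emptyset$, in which case $g(C)$ and $C$ are unions over disjoint families of the pairwise disjoint blocks of ${\cal B}$, so $g(C)\cap C = \emptyset$. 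Hence $g(C)\cap C\in\{\emptyset, C\}$ for all $g$, so $C$ is a block. Finally, fixing any $C_0\in{\cal C}$ with corresponding $D_0\in{\cal C}/{\cal B}$ and letting $g$ range over $G$, the set $(g/{\cal B})(D_0)$ ranges over all conjugates of $D_0$, which is all of ${\cal C}/{\cal B}$ since it is a block system of the transitive group $G/{\cal B}$; thus $\{g(C_0):g\in G\} = {\cal C}$, exhibiting ${\cal C}$ as the block system of conjugates of $C_0$.

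For the normal case, suppose ${\cal C}/{\cal B}$ is the set of orbits of a normal subgroup $\bar N\tl G/{\cal B}$. Writing $\pi\colon G\to G/{\cal B}$ for the quotient map $\pi(g) = g/{\cal B}$, I would set $N = \pi^{-1}(\bar N)$, which is normal in $G$ and contains $\Ker \pi = \fix_G({\cal B})$. At this point I would invoke that ${\cal B}$, being a \emph{normal} block system, is the set of orbits of some normal subgroup of $G$; that subgroup lies in $\fix_G({\cal B})$ and is already transitive on each block, so $\fix_G({\cal B})$ itself is transitive on each block of ${\cal B}$. Consequently the orbits of $N$ are unions of blocks of ${\cal B}$, and two blocks $B,B'$ lie in a common orbit of $N$ precisely when $(g/{\cal B})(B) = B'$ for some $g/{\cal B}\in\bar N$, i.e.\ precisely when $B,B'$ lie in a common orbit of $\bar N$. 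The orbits of $N$ are therefore exactly the blocks of ${\cal C}$, so ${\cal C}$ is a normal block system of $G$.

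Most of this is bookkeeping; the one genuinely load-bearing point is the claim that $\fix_G({\cal B})$ acts transitively on each block of ${\cal B}$ when ${\cal B}$ is normal, since that is what lets me identify the orbits of $N$ with the blocks of ${\cal C}$ rather than with some finer partition. I would make sure to justify it carefully from the definition of a normal block system, rather than treat it as automatic.
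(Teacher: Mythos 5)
Your proof is correct and follows essentially the same route as the paper: blocks of ${\cal C}$ are verified directly from the block property of ${\cal C}/{\cal B}$ in the quotient, and the normal case is handled by pulling back a normal subgroup of $G/{\cal B}$ whose orbits give ${\cal C}/{\cal B}$ (the paper uses $\fix_{G/{\cal B}}({\cal C}/{\cal B})$, which is interchangeable with your $\bar N$). Your explicit justification that $\fix_G({\cal B})$ is transitive on each block of ${\cal B}$ because ${\cal B}$ is normal is exactly the point the paper leaves implicit, and it is the right thing to insist on.
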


\begin{proof}
Let $C/{\cal B}\in {\cal C}/{\cal B}$, and $C$ be the union of the set of all blocks of ${\cal B}$ contained in $C/{\cal B}$.  Let $g\in G$ such that $g(C)\cap C\not = 1$.  Then $g$ maps some block of ${\cal B}$ in $C$ to another block of ${\cal B}$ in $C$, and so $g/{\cal B}(C/{\cal B}) = C/{\cal B}$.  As $C/{\cal B}$ is a block of $G/{\cal B}$, $g$ maps the set of blocks of ${\cal B}$ in $C$ to the set of blocks of ${\cal B}$ in $C$.  Hence $g(C) = C$ and $C$ is a block of $G$.  Thus ${\cal C}$ is a block system of $G$.

For the ``normal" part, let $N \le G$ such that $N/{\cal B} = \fix_{G/{\cal B}}({\cal C}/{\cal B})$.  Let $C\in{\cal C}$.  Then $N$ is transitive on the set of blocks of ${\cal B}$ contained in $C$.  As $\fix_G({\cal B})/{\cal B} = 1$, $\fix_G({\cal B})\le N$, and so $N$ is transitive on $C$.  Thus ${\cal C}$ is the set of orbits of $N$, and ${\cal C}$ is a normal block system of $G$.
\end{proof}

Thus, in the quotient, (normal) block systems of the quotient correspond to (normal) block systems of the group.  We call the block system ${\cal C}$ of $G$, as in the preceding lemma, the block system of $G$ {\bf induced} by ${\cal C}/{\cal B}$.  Our main result of this section follows easily from the next result.  The preceding result will be used implicitly.

\begin{lem}\label{key tool}
Let $G$ be a $5/2$-closed transitive group with a normal block system ${\cal B}$, with ${\cal E}$ the ${\cal B}$-fixer block system of $G$.  Suppose there is $H\le G$ that is transitive for which $H/{\cal B}$ has a nontrivial normal block system ${\cal C}/{\cal B}$, with ${\cal E}'/{\cal B}$ the ${\cal C}/{\cal B}$-fixer block system of $H/{\cal B}$.  Let $H'\le G$ be maximal such that $H'/{\cal B} = H/{\cal B}$.  If ${\cal E}\preceq{\cal E}'$, then the ${\cal C}$-fixer block system of $H'$ is ${\cal E}'$.
\end{lem}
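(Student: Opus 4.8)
The plan is to fix the natural correspondence $C\leftrightarrow C/{\cal B}$ between the blocks of ${\cal C}$ and those of ${\cal C}/{\cal B}$, and to show that under it the ${\cal C}$-restricting equivalence relation $\equiv'$ of $H'$ (Definition \ref{automorphismtooldef}) coincides with the ${\cal C}/{\cal B}$-restricting equivalence relation $\equiv$ of $H/{\cal B}$. Since the ${\cal C}$-fixer block system of $H'$ is the set of unions of $\equiv'$-classes while ${\cal E}'$ is the set of unions of $\equiv$-classes, this equality of relations is exactly the claim. First I would record three facts used throughout. Because $H'$ is maximal with $H'/{\cal B}=H/{\cal B}$, it is the full preimage of $H/{\cal B}$ in $G$; hence $\fix_G({\cal B})\le H'$ and $\fix_{H'}({\cal C})/{\cal B}=\fix_{H/{\cal B}}({\cal C}/{\cal B})$. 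By Lemma \ref{quotient normal}, ${\cal C}$ is a normal block system of $H'$ and ${\cal E}'$ is a block system of $H'$ with ${\cal C}\preceq{\cal E}'$, so every object in the statement makes sense for $H'$. Finally, as ${\cal B}$ is a normal block system of $G$ it is the set of orbits of some $N\tl G$, so $N\le\fix_G({\cal B})$ is transitive on every block of ${\cal B}$; thus $\fix_G({\cal B})$ is transitive on each $B\in{\cal B}$.

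One inclusion is routine and uses neither hypothesis. For $C\in{\cal C}$ the subgroup $\WStab_{H'}(C)/{\cal B}$ lies in $\fix_{H/{\cal B}}({\cal C}/{\cal B})$, is trivial on $C/{\cal B}$, and is trivial or transitive on every block of ${\cal C}/{\cal B}$; by maximality of the wreath stabilizer it is contained in $\WStab_{H/{\cal B}}(C/{\cal B})$. Hence if $\WStab_{H'}(C)^{C'}$ is transitive, then so is its image, and therefore $\WStab_{H/{\cal B}}(C/{\cal B})^{C'/{\cal B}}$ is transitive. Reading this through the note following Definition \ref{automorphismtooldef}, $C\not\equiv'C'$ forces $C/{\cal B}\not\equiv C'/{\cal B}$; equivalently $\equiv$-equivalence forces $\equiv'$-equivalence, which is the inclusion ${\cal E}'\preceq(\text{the }{\cal C}\text{-fixer block system of }H')$.

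The substance is the reverse implication: assuming $\WStab_{H/{\cal B}}(C/{\cal B})^{C'/{\cal B}}$ is transitive I must build a subgroup of $\WStab_{H'}(C)$ that is transitive on $C'$. Write $\bar K=\WStab_{H/{\cal B}}(C/{\cal B})$. Using the note after Definition \ref{automorphismtooldef} and the fact that ${\cal E}'/{\cal B}$ is the $\equiv$-fixer block system, I would first establish the clean description that $\bar K$ is trivial on exactly those blocks of ${\cal C}/{\cal B}$ lying in the ${\cal E}'$-block $E'_1$ containing $C$, and transitive on every other block of ${\cal C}/{\cal B}$; in particular, as $C/{\cal B}\not\equiv C'/{\cal B}$, the block $C'$ lies in a different ${\cal E}'$-block. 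Let $\tilde K\le\fix_{H'}({\cal C})$ be the preimage of $\bar K$. Since $\fix_G({\cal B})\le\tilde K$ is transitive on each block of ${\cal B}$ while $\tilde K/{\cal B}=\bar K$ is transitive on each block of ${\cal C}/{\cal B}$ outside $E'_1$, the group $\tilde K$ is transitive on every block of ${\cal C}$ outside $E'_1$, in particular on $C'$. The sole defect is that $\tilde K$ is not trivial on $C$, and removing this is the crux.

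To remove it I would invoke that $G$ is $5/2$-closed, applied to $G$ with block system ${\cal B}$, whose ${\cal B}$-fixer block system is ${\cal E}$. For $g\in\tilde K$ the image $g/{\cal B}\in\bar K$ is trivial on $E'_1/{\cal B}$, so $g$ fixes setwise every block of ${\cal B}$ contained in $E'_1$; because ${\cal E}\preceq{\cal E}'$ the block $E'_1$ is a union of ${\cal E}$-blocks, so for each ${\cal E}$-block $E\subseteq E'_1$ the element $g$ fixes setwise every block of ${\cal B}$ in $E$, and $5/2$-closure gives $g|_E\in G$. Multiplying these over the (disjoint) ${\cal E}$-blocks inside $E'_1$ gives $g|_{E'_1}\in G$, whence $g|_{\overline{E'_1}}=g\,(g|_{E'_1})^{-1}\in G$ is trivial on $E'_1$ and agrees with $g$ off $E'_1$; one checks $g|_{\overline{E'_1}}\in\fix_{H'}({\cal C})$ and that its image mod ${\cal B}$ still equals $g/{\cal B}\in H/{\cal B}$, so it lies in $H'$. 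Setting $L=\la g|_{\overline{E'_1}}:g\in\tilde K\ra$, the group $L$ is trivial on $E'_1\supseteq C$, equals $\tilde K$ on the complement of $E'_1$, hence is trivial or transitive on every block of ${\cal C}$ and transitive on $C'$. Thus $L\le\WStab_{H'}(C)$ and $\WStab_{H'}(C)^{C'}$ is transitive, giving $C\not\equiv'C'$ and the reverse inclusion, so the two fixer block systems coincide. The main obstacle I anticipate is exactly this last step: arranging the restriction so that the resulting permutation both kills the action on $C$ and remains an element of $H'$ lying inside the wreath stabilizer. The hypothesis ${\cal E}\preceq{\cal E}'$ is precisely what licenses the $5/2$-closure restriction to $E'_1$ (it makes $E'_1$ a union of fixer-blocks of $G$), while the maximality of $H'$ both supplies $\fix_G({\cal B})$ inside $H'$ to furnish transitivity within blocks of ${\cal B}$ and guarantees that the restricted elements return to $H'$.
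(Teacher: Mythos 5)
Your proof is correct and follows essentially the same route as the paper's: the key step in both is to take elements of $\fix_{H'}({\cal C})$ whose image mod ${\cal B}$ is trivial on the relevant ${\cal C}/{\cal B}$-blocks, use ${\cal E}\preceq{\cal E}'$ to write the corresponding ${\cal E}'$-block as a union of ${\cal E}$-blocks, and invoke $5/2$-closedness to restrict such elements to the identity there while leaving them unchanged elsewhere. Your choice of working with the full preimage of $\WStab_{H/{\cal B}}(C/{\cal B})$, which already contains $\fix_G({\cal B})$ and hence is automatically transitive on each ${\cal B}$-block, neatly absorbs the paper's separate step of adjoining $\fix_{H'}({\cal B})\vert_E$ to upgrade transitivity on blocks of ${\cal B}$ to transitivity on blocks of ${\cal C}$, but the mechanism is the same.
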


\begin{proof}
By Lemma \ref{quotient normal}, ${\cal C}$ is a normal block system of $H$.  As ${\cal B}$ is a block system of $G$, ${\cal B}$ is a block system of both $H$ and $H'$, and as $H'/{\cal B} = H/{\cal B}$, ${\cal C}/{\cal B}$ is a block system of $H'/{\cal B}$.  Hence ${\cal C}$ is a block system of $H'$.  Similarly, ${\cal E}'$ is a block system of $H'$.  
Let $K\le\fix_{H'}({\cal C})$ such that $(K/{\cal B})^{C/{\cal B}}$ is trivial or transitive for every $C\in{\cal C}$.  We will first show that there exists a subgroup $K'\le \fix_{H'}({\cal C})$ such that $K'/{\cal B} = K/{\cal B}$ and if $(K/{\cal B})^{C/{\cal B}} = 1$, then $(K')^C = 1$ and otherwise that $(K')^C = K^C$.

Let $C\in{\cal C}$ such that $(K/{\cal B})^{C/{\cal B}} = 1$.  Let $k\in K$.  Then $k(B) = B$ for every $B\in{\cal B}$ contained in $C$.  Let $E'\in{\cal E}'$ that contains some $B\in{\cal B}$ contained in $C$.  Let $C'\in{\cal C}$ with $C'\subseteq E'$.  As $\WStab_{G/{\cal B}}(C/{\cal B}) = \WStab_{G/{\cal B}}(C'/{\cal B})$, we see that $k$ fixes setwise each block of ${\cal B}$ contained in $E'$.  Let $D$ be the union of all $E'\in{\cal E}'$ that contain a block of ${\cal B}$ contained in $C$. Then $k$ fixes each block of ${\cal B}$ contained in $D$ setwise.  As $D$ is a union of blocks of ${\cal E}'$ and ${\cal E}\preceq{\cal E}'$, $D$ is a union of blocks of ${\cal E}$.  As $G$ is $5/2$-closed, we see $k\vert_E\in G$ for every $E\in{\cal E}$ with $E\subseteq D$.  Hence $k\vert_D\in G$.

Now, $\hat{k} = [k(k\vert_D)^{-1}]$ has the property that $\hat{k}/{\cal B} = k/{\cal B}$ and $\hat{k}^D = 1$.  Additionally, $\hat{k}^C = k^C$ for every block of ${\cal C}$ not contained in $D$.  Repeating this procedure for every block $C'$ of ${\cal C}$ for which $(K/{\cal B})^{C'/{\cal B}} = 1$, we see there is $k'\in\fix_{H'}({\cal C})$ such that $k'/{\cal B} = k/{\cal B}$, $(k')^C = 1$ if $k$ fixes setwise each block of ${\cal B}$ contained in $C$, and $(k')^C = k^C$ for every other block $C\in{\cal C}$.  Hence $K' = \la k':k\in K\ra$ has the properties that  $K'/{\cal B} = K/{\cal B}$ and if $K/{\cal B}$ fixes every block of ${\cal B}$ contained in $C\in{\cal C}$, then $(K')^C = 1$, and otherwise $(K')^C = K^C$.  Note this implies that $(K')^C$ is trivial for every block $C\in{\cal C}$ such that $(K/{\cal B})^{C/{\cal B}}$ is trivial.

Now fix $C\in{\cal C}$ for which $(K')^C\not = 1$.  Then $(K')^C$ is transitive on the blocks of ${\cal B}$ contained in $C$.  Let $E\in{\cal E}$ with $C\subseteq E$.  As $G$ is $5/2$-closed we have that $\fix_{H'}({\cal B})\vert_E\le G$.  The group $\la K',\fix_{H'}({\cal B})\vert_E\ra$ is then transitive on $C$.  Also, as ${\cal E}\preceq{\cal E}'$, we see that $\la K',\fix_G({\cal B})\vert_E\ra$ is trivial on every block of ${\cal C}$ that $K'$ is trivial on.  Replacing $K'$ by $\la K',\fix_{H'}({\cal B})\vert_E\ra$ and repeating this procedure for every $C\in{\cal C}$ for which $(K')^C\not = 1$, we may assume without loss of generality that $K'$ is transitive on every block of ${\cal C}$ that $K'$ is not trivial on.

We now see that $\WStab_{H'}(C)^{C'}$ is trivial if and only if $\WStab_{H/{\cal B}}(C/{\cal B})^{C'/{\cal B}}$ is trivial and $\WStab_{H'}(C)^{C'}$ is transitive if and only if $\WStab_{H/{\cal B}}(C/{\cal B})^{C'/{\cal B}}$ is transitive, and so $C\equiv C'$ if and only if $C/{\cal B}\equiv C'/{\cal B}$.  Thus the ${\cal C}$-fixer block system of $H'$ is ${\cal E}'$.
\end{proof}

\begin{thrm}\label{quotient 5/2-closed}
Let $G$ be a $5/2$-closed transitive group with a normal block system ${\cal B}$ with ${\cal E}$ the ${\cal B}$-fixer block system of $G$.  Suppose that whenever ${\cal B}\preceq{\cal C}$ is a normal block system of any transitive subgroup $H$ of $G$ then the block system induced by the ${\cal C}/{\cal B}$-fixer block system ${\cal E}_{\cal C}/{\cal B}$ of $H/{\cal B}$ is refined by ${\cal E}$.  Then $G/{\cal B}$ is $5/2$-closed.
\end{thrm}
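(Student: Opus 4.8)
The goal is to verify the defining property of a $5/2$-closed group for $G/{\cal B}$, and the plan is to reduce every instance of that property for $G/{\cal B}$ to the corresponding instance for $G$. The substance of the argument has already been carried by Lemma \ref{key tool}, so the theorem should follow by matching up the relevant fixer block systems and moving block systems between $G$ and its quotient via Lemma \ref{quotient normal}.

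Concretely, I would first take an arbitrary transitive subgroup $\bar H\le G/{\cal B}$, an arbitrary normal block system $\bar{\cal C}$ of $\bar H$, and an arbitrary $\bar g\in G/{\cal B}$ that fixes setwise each block of $\bar{\cal C}$ lying in a fixed block $\bar E'$ of the $\bar{\cal C}$-fixer block system $\bar{\cal E}'$ of $\bar H$; the task is to show $\bar g\vert_{\bar E'}\in G/{\cal B}$. Let $H=\{h\in G: h/{\cal B}\in\bar H\}$ be the full preimage of $\bar H$. Because ${\cal B}$ is a normal block system of $G$, the kernel $\fix_G({\cal B})$ of $g\mapsto g/{\cal B}$ is transitive on each block of ${\cal B}$, so $H\supseteq\fix_G({\cal B})$ is transitive on the underlying set and is the maximal subgroup of $G$ with $H/{\cal B}=\bar H$. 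The trivial choices of $\bar{\cal C}$ yield vacuous conditions, so I may assume $\bar{\cal C}={\cal C}/{\cal B}$ is nontrivial; by Lemma \ref{quotient normal} it induces a normal block system ${\cal C}$ of $H$ with ${\cal B}\preceq{\cal C}$, and $\bar{\cal E}'={\cal E}'/{\cal B}$ induces a block system ${\cal E}'$ of $H$.

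Next I would invoke the hypothesis of the theorem for the transitive subgroup $H$ and its normal block system ${\cal C}$: the block system induced by the ${\cal C}/{\cal B}$-fixer block system ${\cal E}'/{\cal B}$, namely ${\cal E}'$, is refined by ${\cal E}$, i.e.\ ${\cal E}\preceq{\cal E}'$. This is exactly the hypothesis needed to apply Lemma \ref{key tool} (with $H'=H$), whose conclusion is that the ${\cal C}$-fixer block system of $H$ equals ${\cal E}'$. Now I would lift $\bar g$ to some $g\in G$ with $g/{\cal B}=\bar g$. Since $\bar g$ fixes setwise each block of ${\cal C}/{\cal B}$ contained in $\bar E'=E'/{\cal B}$, the element $g$ fixes setwise each block of ${\cal C}$ contained in $E'$. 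Applying the $5/2$-closure of $G$ to the transitive subgroup $H$, its normal block system ${\cal C}$ (whose ${\cal C}$-fixer block system is ${\cal E}'$), the element $g\in G$, and the block $E'\in{\cal E}'$ yields $g\vert_{E'}\in G$. Finally I would observe that $(g\vert_{E'})/{\cal B}=\bar g\vert_{\bar E'}$, since $g\vert_{E'}$ acts as $g$ on the blocks of ${\cal B}$ inside $E'$ and as the identity on all other blocks; hence $\bar g\vert_{\bar E'}\in G/{\cal B}$, as required.

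The genuine mathematical work has already been discharged in Lemma \ref{key tool}, so I expect no serious obstacle here; the points to handle with care are purely formal. These are: confirming that the full preimage $H$ is transitive and maximal over $\bar H$, so that Lemma \ref{key tool} applies verbatim; checking that restriction commutes with passage to the quotient, i.e.\ $(g\vert_{E'})/{\cal B}=\bar g\vert_{\bar E'}$; and verifying that ``$\bar g$ fixes the blocks of $\bar{\cal C}$ in $\bar E'$'' lifts exactly to ``$g$ fixes the blocks of ${\cal C}$ in $E'$.'' The one place to be slightly attentive is the treatment of trivial normal block systems $\bar{\cal C}$ of $\bar H$, where the defining condition degenerates and must be seen to hold automatically.
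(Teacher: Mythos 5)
Your proposal is correct and follows essentially the same route as the paper: pass to the maximal (full) preimage $H'$ of the transitive subgroup of $G/{\cal B}$, use the hypothesis to get ${\cal E}\preceq{\cal E}'$, apply Lemma \ref{key tool} to identify the ${\cal C}$-fixer block system of $H'$ as ${\cal E}'$, and then invoke the $5/2$-closure of $G$ on a lift of $\bar g$ before projecting back down. Your version is, if anything, slightly more careful than the paper's about why the full preimage is transitive and why restriction commutes with the quotient map.
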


\begin{proof}
Let $H\le G$ be transitive.  Then ${\cal B}$ is a block system of $H$.  Let ${\cal B}\preceq{\cal C}$ be such that ${\cal C}/{\cal B}$ is a normal block system of $H/{\cal B}$.  Let $H'\le G$ be the largest subgroup of $G$ such that $H'/{\cal B} = H/{\cal B}$, with ${\cal E}'/{\cal B}$ the ${\cal C}/{\cal B}$-fixer block system of $H'/{\cal B}$.  As $H'/{\cal B} = H/{\cal B}$, we see that ${\cal E}'/{\cal B}$ is the ${\cal C}/{\cal B}$-fixer block system of $H/{\cal B}$.  By hypothesis, ${\cal E}\preceq{\cal E'}$.  By Lemma \ref{key tool}, the ${\cal C}$-fixer block system of $H'$ is ${\cal E}'$.  Let $g/{\cal B}\in G/{\cal B}$ such that $g/{\cal B}$ fixes each block of ${\cal C}/{\cal B}$ contained in $E'/{\cal B}\in{\cal E}'/{\cal B}$ set-wise.  Then $g\in G$ fixes each block of ${\cal C}$ contained in $E'$ setwise, and as $G$ is $5/2$-closed, we see that $g\vert_{E'}\in G$.  Then $g/{\cal B}\vert_{E'/{\cal B}}\in G/{\cal B}$.  As $H$ and ${\cal C}$ are arbitrary, $G/{\cal B}$ is $5/2$-closed by definition.
\end{proof}

\section{Combinatorial objects with $5/2$-closed automorphism groups}

In this section we explore the natural question of which combinatorial objects have automorphism groups that are $5/2$-closed? While we do not have a characterization of such combinatorial objects, we first give a large class of combinatorial objects whose automorphism groups are $5/2$-closed which contains graphs, digraphs, and configurations.  

\begin{definition}\label{tuple def}
Let $X$ be a finite set and $T$ a subset of $\cup_{i=1}^n X^i$ for some $n < \vert X\vert$.  Hence $T$ is a set of $k_i$-tuples with elements in $X$ for some set of integers $k_1,\ldots,k_r < n$.  We will call $T$ a {\bf tuple system} on $X$.  Let $S = \{\{u_1,\ldots,u_t\}:(u_1,\ldots,u_t)\in T\}$, so $S$ is the set of coordinates of each element of $T$, and we call it the  {\bf set system corresponding to $T$}.  Let $m\ge 0$ be an integer.  We say a set system $S$ is $m$-intersecting if for every $U_1,U_2\in S$ with $U_1\not = U_2$, $\vert U_1\cap U_2\vert\le m$.  A tuple system $T$ will also be called $1$-intersecting if its corresponding set system is $1$-intersecting.
\end{definition}

\begin{example}
The arc set of a finite digraph $\Gamma$ is a tuple system, with corresponding set system the set of edges of the underlying simple graph.  Also, observe that $A(\Gamma)$ is $1$-intersecting, as two different arcs can only share at most one head or tail.
\end{example}

\begin{example}
A {\bf configuration} is an ordered triple $(P,L,{\cal I})$, where $P$ is a set whose elements are called {\bf points}, $L$ is a set whose elements are called {\bf lines}, and ${\cal I}$ is an {\bf incidence relation} (or subset of $P\times L$).  If $(p,\ell)\in{\cal I}$, we say the point $p$ is on, or incident with, the line $\ell$ or the line $\ell$ contains, or is incident with, the point $p$.  The incidence relation of a configuration satisfies several properties, one of which is that two points are on at most one line and two lines contain at most one point in common.  So the set of lines of a configuration is a $1$-intersecting set system regardless of the other properties (which can be found in \cite[pg. 15]{Grunbaum2009}).  The point here is that $1$-intersecting set systems properly contain the set of configurations.  We remark that in design theory, a configuration is often referred to as a $1$-design.
\end{example}

\begin{definition}
Let $T$ be a tuple (set) system.  We say $T$ is {\bf colored} if each tuple (set) in $T$ has been identified with a color (or integer).  Hence there is a function $f:T\to {\mathbb N}$ and $f(t)$ is simply the color of the tuple (set) $t\in T$.
\end{definition}

We may think of an uncolored tuple (set) system $T$ as a colored tuple (set) system simply by assigning the same color to every element of $T$.

\begin{definition}
Let $T$ and $T'$ be colored tuple (set) systems defined on $X$ and $Y$, respectively.  An {\bf isomorphism} from $T$ to $T'$ is a bijection $f:X\to Y$ that preserves tuples (sets) and colors.  That is, $f(t)\in T'$ if and only if $t\in T$ and for every color $i$, the image of the set of all tuples (sets) in $T$ colored $i$ is the set of all tuples (sets) colored $i'$ in $T'$, for some color $i'$.  An {\bf automorphism} of $T$ is an isomorphism from $T$ to $T$ that fixes each color, and the set of all automorphisms of $T$ is the {\bf automorphism group} of $T$, denoted $\Aut(T)$.  A colored tuple (set) system $T$ is {\bf point-transitive} if $\Aut(T)$ is transitive on $X$.
\end{definition}

\begin{thrm}\label{1-int 2.5 closed}
Let $T$ be a point-transitive colored $1$-intersecting tuple system on $X$.  Then $\Aut(T)$ is $5/2$-closed.
\end{thrm}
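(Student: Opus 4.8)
The plan is to verify directly that $G=\Aut(T)$ satisfies the defining property of a $5/2$-closed group. Note first that $G$ is transitive because $T$ is point-transitive. Fix a transitive $H\le G$, a normal block system ${\cal B}$ of $H$, the ${\cal B}$-fixer block system ${\cal E}$ of $H$, a block $E\in{\cal E}$, and an element $g\in G$ that fixes every $B\in{\cal B}$ with $B\subseteq E$ setwise. Since $g$ permutes these blocks trivially it fixes $E$ setwise, so $g\vert_E$ is well defined and fixes $X\setminus E$ pointwise. I would reduce the goal $g\vert_E\in G$ to the following local statement: \emph{for every tuple $\mathbf u\in T$ there is $h\in G$ agreeing with $g\vert_E$ on the coordinate set $U$ of $\mathbf u$}. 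Granting this, $g\vert_E(\mathbf u)=h(\mathbf u)\in T$ with the color of $\mathbf u$; since $(g\vert_E)^{-1}=(g^{-1})\vert_E$ and $g^{-1}$ also fixes each $B\subseteq E$ setwise, the same reasoning applied to $g^{-1}$ shows $(g\vert_E)^{-1}$ likewise preserves $T$ and colors, so $g\vert_E\in\Aut(T)$.

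The main tool will be the wreath stabilizers, all of which lie in $\fix_H({\cal B})\le G$. Because the blocks contained in $E$ form a single class of the ${\cal B}$-restricting relation $\equiv$, they share one common wreath stabilizer $W$; since a block $B\subseteq E$ and a block $B^*\not\subseteq E$ lie in different $\equiv$-classes, $B\not\equiv B^*$ forces $W^{B^*}=\WStab_H(B)^{B^*}$ to be transitive, while $W$ fixes $E$ pointwise. Symmetrically, for any $B^*\not\subseteq E$ the group $\WStab_H(B^*)$ fixes $B^*$ pointwise and is transitive on every $B\subseteq E$. The cases $U\subseteq E$ and $U\cap E=\emptyset$ are immediate, taking $h=g$ or $h=\mathrm{id}$ respectively; all the content lies in the crossing tuples, those with $U\cap E$ and $U\setminus E$ both nonempty.

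For a crossing tuple $\mathbf u$ I would first use the $1$-intersecting hypothesis to show $U$ meets $E$ in a single point. If $u_a,u_b\in U\cap E$ were distinct, then choosing $w\in W$ that moves a coordinate of $\mathbf u$ lying outside $E$ produces a second tuple whose coordinate set still contains both $u_a$ and $u_b$; as distinct coordinate sets meet in at most one point, this is a contradiction unless $U$ already contains the whole outer block, a degenerate situation in which $g(U\cap E)=U\cap E$ forces the coordinate set to be preserved and which is treated separately. So write $U\cap E=\{u_a\}$ with $u_a\in B_1\subseteq E$. A second application of the sweeping argument confines the outer coordinates $U\setminus E$ to a single $\equiv$-class $E'$: otherwise an element of $W$, or of $\WStab_H(B^*)$ for a suitable outer block $B^*$, could be chosen to fix one outer coordinate while moving another, again creating two tuples sharing two coordinates. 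With $U\setminus E\subseteq E'$, the group $\WStab_H(E')$ fixes $U\setminus E$ pointwise and is transitive on $B_1$, so I may pick $h\in\WStab_H(E')$ with $h(u_a)=g(u_a)$; this $h$ agrees with $g\vert_E$ on all of $U$, completing the local statement.

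The hard part is exactly this analysis of crossing tuples. Making the sweeping arguments airtight — both absorbing the degenerate configurations in which a coordinate set swallows an entire block, and showing the outer coordinates cannot be spread across inequivalent $\equiv$-classes — is where the $1$-intersecting hypothesis is indispensable, since it is precisely what forces the swept images to collide with the original tuple in two coordinates. For a general $k$-ary relational structure the swept images need not collide, the argument breaks down, and indeed the examples above show $5/2$-closed groups need not be $3$-closed; so no such statement can hold without this combinatorial input.
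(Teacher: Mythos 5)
Your overall strategy is the paper's: reduce to showing each tuple survives $g\vert_E$, split according to how the coordinate set $U$ meets $E$, and use wreath stabilizers to sweep coordinates and collide with the $1$-intersecting hypothesis. The cases $U\subseteq E$ and $U\cap E=\emptyset$, and the reduction of a crossing tuple to $\vert U\cap E\vert=1$, match the paper's first step. The gap is in your treatment of the remaining crossing case. Your ``second sweeping argument'' --- choosing a single element of one wreath stabilizer that fixes one outer coordinate while moving another --- does not go through: a group $W=\WStab_H(B)$ with $W^{B'}$ and $W^{B''}$ both transitive need not contain an element fixing a prescribed point of $B'$ and moving a prescribed point of $B''$ (for instance $W$ could act semiregularly, rotating $B'$ and $B''$ in lockstep), so your sweep guarantees only one common coordinate between the two tuples, not two, and yields no contradiction. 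Consequently you have not ruled out a tuple whose distinct coordinates lie one each in three or more $\equiv$-classes, and for such a tuple no single wreath stabilizer fixes all of $U\setminus E$ pointwise while realizing $g$ on $U\cap E$, so your construction of $h$ fails. What is needed here is the paper's second step: take the wreath stabilizers $\WStab_G(B_1),\ldots,\WStab_G(B_j)$ of \emph{all} the blocks meeting $U$ and show, by alternating which stabilizer is applied, that the orbit of the coordinate tuple under the group they generate is the full product $B_1\times\cdots\times B_j$. That statement directly gives $g\vert_E(\mathbf{u})\in T$ with the correct color (since $g\vert_E$ only moves the one coordinate within $B_1$), and as a byproduct it forces $j\le 2$ when the blocks are nontrivial (two members of the product differing in one slot would have coordinate sets sharing $j-1\ge 2$ points), which is exactly the fact your second sweep was meant to deliver.

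A smaller issue: in your first sweep you rightly note that the collision fails if $U$ swallows an entire outer block (then every $w\in W$ satisfies $w(U)=U$ and $1$-intersection gives nothing), but your dismissal of this case is unjustified, since $g$ fixing each block of ${\cal B}$ in $E$ setwise does not give $g(U\cap E)=U\cap E$. To be fair, the paper's own argument passes over the same configuration (it derives $\gamma(t)\ne t$ where the contradiction requires $\gamma(s)\ne s$), so this is not a defect relative to the published proof; but having explicitly isolated the case, you owe it an actual argument rather than the assertion you give.
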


\begin{proof}
We may assume without loss of generality that $\Aut(T)$ has some transitive subgroup $G$ with a normal block system ${\cal B}$ for which the ${\cal B}$-fixer block system ${\cal E}$ of $H$ is not $\{X\}$ as otherwise $\Aut(T)$ is trivially $5/2$-closed.  Thus ${\cal E}$ has at least two blocks.  Let $t = (t_1,\ldots,t_k)\in T$ be of color $c$, and $s = \{t_i:1\le i\le k\}$.

Suppose that for some $1\le i < j\le k$ we have $t_i,t_j\in E\in{\cal E}$, $t_i\not = t_j$, and $t_\ell\not\in E$ for some $1\le \ell\le k$.  That is, that there are at least two different elements of $t$ in $E$ and at least one element of $t$ not in $E$.  Let $B'\in{\cal B}$ such that $t_\ell\in B'$, and $E'\in{\cal E}$ such that $B'\subseteq E'$.   As $t_\ell\not\in E$, $B\not\equiv B'$ for every $B\in {\cal B}$ with $B\subseteq E$.  Hence $\WStab_G(B)^{B'}$ is transitive, and there exists $\gamma\in \fix_G({\cal B})$ such that $\gamma^B = 1$ and $\gamma(t_\ell)\not = t_\ell$ for every $B\in{\cal B}$ with $B\subseteq E$.  Then $\gamma(t)\not = t$ as $t_\ell$ is a coordinate of $t$, and $\gamma(t)$ and $t$ have color $c$.  Then $\vert\gamma(s)\cap s\vert \ge 2$ as $t_i\not = t_j$ are in $s$, a contradiction.  So every coordinate of $t$ is contained in $E$.

Suppose $\vert s\cap E\vert \le 1$ for every $E\in{\cal E}$.  We are not assuming the coordinates of $t$ are all distinct.  Let $s = \{t_{i_1},t_{i_2},\ldots,t_{i_j}\}$ (so $j$ is the number of distinct elements of $s$, and $j\le k$), and for each $1\le \ell\le j$, let $T_{i_\ell}$ be the set of coordinates of $t$ that are $t_{i_\ell}$.  Then $\sum_{\ell=1}^j\vert T_{i_\ell}\vert = k$. Also, there exist distinct blocks $E_1,\ldots,E_j$ of ${\cal E}$ with $t_{i_\ell}\in E_\ell$, $1\le \ell\le j$.  There also exist blocks $B_1,\ldots,B_j\in{\cal B}$ such that $t_{i_\ell}\in B_\ell$, and $B_1,\ldots,B_j$ are all distinct as $B_\ell\subseteq E_\ell$, $1\le \ell\le j$.    Now, for each $1\le \ell,m\le j$ with $\ell\not= m$, we have $E_\ell\not = E_m$ and so $\WStab_G(B_\ell)^{B_m}$ is transitive. This implies that $(t_1',\ldots,t_k')\in T$, where $t_i' = t_i$ if $i\in T_\ell$ and for any $1\le m\le j$, $m\not = \ell$, we have that for all $i\in T_m$, $t_i'$ are equal but an arbitrary element of $B_m$.  Successively choosing $\ell = 1$ and $m = 2,\ldots,j$, we see that $(t_1,y_2,\ldots,y_k)\in T$ where $y_i\in B_m$ is arbitrary but equal for all  $i\in T_m$, $2\le m\le j$, and $y_i = t_i$ if $t_i\in E_1$, and are all colored $c$.  As $\fix_G({\cal B})^{B_1}$ is transitive, we have that $(y_1,y_2,\ldots,y_k)\in T$ for every $y_i\in B_\ell$ is arbitrary but equal for all $i\in T_\ell$, $1\le i\le k$ and $1\le i\le j$, and are all colored $c$.

Now let $g\in\Aut(T)$ such that $g$ fixes set-wise all blocks of ${\cal B}$ contained in a block $E\in{\cal E}$.  If $\vert s\cap E\vert\ge 2$ for some $E\in{\cal E}$, then every coordinate of $t$ is contained in $E$.  Hence $g\vert_{E}(t)\in T$ and is colored $c$.  If $\vert s\cap E\vert = 0$, then $g\vert_E(t) = t\in T$.  Otherwise, we will use the notation of the previous paragraph.  As $(y_1,y_2,\ldots,y_k)\in T$ for every $y_i\in B_\ell$ is arbitrary but equal for all $i\in T_\ell$, $1\le i\le k$ and $1\le\ell\le j$, and are all colored $c$, we see that $g\vert_E(t_1,\ldots,t_k)\in T$ and is colored $c$.  Thus $G$ is $5/2$-closed.
\end{proof}

\begin{example}\label{set to tuple}
Let $S$ be a $1$-intersecting set system.  Just as we identify the edge set of a graph $\Gamma$ with the arc set $\{(x,y):xy\in E(\Gamma)\}$ to obtain a digraph, we may identify $S$ with a tuple system $T$.  Namely, we let $T$ be the colored tuple system that is the union of all $r!$ $r$-tuples whose set of coordinates are an element $s$ of $S$ of length $r$.  We color each tuple of $T$ the same color as its corresponding set $s$.  Note that $\Aut(S) = \Aut(T)$.  Conversely, if $T$ is a colored tuple system in which for every $r$-tuple with underlying set $s$ we have all $r!$ $r$-tuples on $s$ and all are colored the same color, then we may identify $T$ with a colored set system in the obvious manner, and $\Aut(T) = \Aut(S)$ here as well.  In this way, we may identify a set system, and in particular, a configuration, with a $1$-intersecting tuple system.
\end{example}

Hence if one proves a group theoretic result that is true for all transitive $5/2$-closed permutation groups, it also holds for all $2$-closed groups, which in our language here is the automorphism group of a colored tuple system, as well as the automorphism groups of configurations!

\section{The $5/2$-closure of a wreath product is the wreath product of the $5/2$-closures}

In this section, we will show that some commonly used properties of $2$-closed groups are shared by $5/2$-closed groups.  In particular, in Theorem \ref{wreath 5/2 closed} we will show that the $5/2$-closure of the wreath product of two permutation groups is $5/2$-closed.  The same result for $2$-closed groups is given in \cite[Theorem 5.1]{Cameronetal2002}.  As a consequence, we show in Lemma \ref{5/2 closure of p-group} that the $5/2$-closure of a $p$-group is a $p$-group, an analogue of \cite[Exercise 5.28]{Wielandt1969} needed for our main application.  We begin with some basic definitions.

\begin{definition}
Let $G\le S_X$ and $H\le S_Y$.  Define the {\bf wreath product of $G$ and $H$}, denoted $G\wr H$, to be the set of all permutations of $X\times Y$ of the form $(x,y)\mapsto (g(x),h_x(y))$, where $g\in G$ and each $h_x\in H$.
\end{definition}

It is straightforward to show that $G\wr H$ is a permutation group.

\begin{definition}
Let $G\le S_X$ and $H\le S_Y$ both be transitive.  Let $B_x = \{x\}\times Y$, $x\in X$, and ${\cal B} = \{B_x:x\in X\}$.  Then ${\cal B}$ is a block system of $G\wr H$ by \cite[Theorem 4.3.7]{Book}.  It is called the {\bf lexi-partition of $G\wr H$ corresponding to $Y$}.
\end{definition}

\begin{lem}\label{wreath block}
Let $G_1\le S_m$ and $G_2\le S_n$ be transitive.  If ${\cal B}$ is a normal block system of $G_2$, then ${\cal C} = \{\{i\}\times B:i\in\Z_m, B\in{\cal B}\}$ is a normal block system of $G_1\wr G_2$.
\end{lem}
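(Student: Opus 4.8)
The plan is to verify the two assertions separately: first that $\cal C$ is a block system of $G_1\wr G_2$, and then that it is a \emph{normal} one by exhibiting an explicit normal subgroup whose orbits are exactly the blocks of $\cal C$. Throughout I write a generic element of $G_1\wr G_2$ as $\phi(i,y)=(g(i),h_i(y))$ with $g\in G_1$ and each $h_i\in G_2$, where $i\in\Z_m$ and $y\in\Z_n$.

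First I would check that $\cal C$ is a partition: since $\cal B$ partitions $\Z_n$, the sets $\{i\}\times B$, as $i$ ranges over $\Z_m$ and $B$ over $\cal B$, partition $\Z_m\times\Z_n$. For invariance, a direct computation gives
$$\phi(\{i\}\times B)=\{(g(i),h_i(y)):y\in B\}=\{g(i)\}\times h_i(B).$$
Because $\cal B$ is a block system of $G_2$ and $h_i\in G_2$, the set $h_i(B)$ is a block conjugate to $B$, hence $h_i(B)\in\cal B$; thus $\phi(\{i\}\times B)\in\cal C$. So $\phi$ permutes the parts of $\cal C$ among themselves, each part is a block, and $\cal C$ is $G_1\wr G_2$-invariant. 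As $G_1$ and $G_2$ are transitive, $G_1\wr G_2$ is transitive and acts transitively on the parts of $\cal C$, so $\cal C$ is a block system.

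For the ``normal'' part, I would use that $\cal B$ being a normal block system of $G_2$ supplies a normal subgroup $N\tl G_2$ whose orbits are the blocks of $\cal B$. Consider the base-group subgroup
$$M=\{(i,y)\mapsto(i,n_i(y)):n_i\in N\text{ for each }i\in\Z_m\}\le G_1\wr G_2,$$
that is, the copy of $\prod_{i\in\Z_m}N$ inside the wreath product. The orbit of a point $(i,y)$ under $M$ is $\{i\}\times(\text{the }N\text{-orbit of }y)=\{i\}\times B$, where $B\in\cal B$ is the block containing $y$; hence the set of orbits of $M$ is exactly $\cal C$.

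It remains to show $M\tl G_1\wr G_2$, which I expect to be the main obstacle, as it requires careful bookkeeping of the index shift in the wreath action. Given $\phi$ as above and $\mu\in M$ with $\mu(i,y)=(i,n_i(y))$, one has $\phi^{-1}(i,y)=(g^{-1}(i),h_{g^{-1}(i)}^{-1}(y))$, and composing yields
$$\phi\mu\phi^{-1}(i,y)=\bigl(i,\ h_{g^{-1}(i)}\,n_{g^{-1}(i)}\,h_{g^{-1}(i)}^{-1}(y)\bigr).$$
Since $N\tl G_2$ with $h_{g^{-1}(i)}\in G_2$ and $n_{g^{-1}(i)}\in N$, each map $h_{g^{-1}(i)}\,n_{g^{-1}(i)}\,h_{g^{-1}(i)}^{-1}$ lies in $N$, so $\phi\mu\phi^{-1}\in M$. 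Therefore $M\tl G_1\wr G_2$, and $\cal C$, being the set of orbits of the normal subgroup $M$, is a normal block system of $G_1\wr G_2$. Everything outside this conjugation computation is routine.
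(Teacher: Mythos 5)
Your proof is correct, and it takes a genuinely different route from the paper's. For the block-system part, the paper does not compute the image of a part directly; instead it introduces the lexi-partition ${\cal D}=\{\{i\}\times\Z_n\}$, notes ${\cal C}\preceq{\cal D}$, observes that each $C\in{\cal C}$ with $C\subseteq D$ is a block of $\Stab_{G_1\wr G_2}(D)^D\cong G_2$, and then cites the general fact that a block of the transitive constituent of the setwise stabilizer of a block is a block of the whole group. Your calculation $\phi(\{i\}\times B)=\{g(i)\}\times h_i(B)$ reaches the same conclusion more directly and without any citation, at the cost of having to note explicitly that an invariant partition under a transitive group is a block system. For normality, the paper exhibits $\fix_{G_1\wr G_2}({\cal C})=\{(i,j)\mapsto(i,g_i(j)):g_i\in\fix_{G_2}({\cal B})\}$, which is automatically normal as the kernel of the action on ${\cal C}$, and checks its orbits are ${\cal C}$ (using that $\fix_{G_2}({\cal B})\ge N$ is transitive on each block of ${\cal B}$); you instead take the smaller base subgroup $M\cong\prod_{i\in\Z_m}N$ and verify normality by the conjugation formula $\phi\mu\phi^{-1}(i,y)=\bigl(i,\,h_{g^{-1}(i)}n_{g^{-1}(i)}h_{g^{-1}(i)}^{-1}(y)\bigr)$, which is correct and consistent with the paper's convention that the coordinate function $h_x$ is indexed by the input. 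Your version is more self-contained and hews exactly to the paper's definition of a normal block system (orbits of a normal subgroup); the paper's version avoids the index-shift bookkeeping by leaning on the standing fact that fixers of block systems are normal.
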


\begin{proof}
Let ${\cal D}$ be 
the lexi-partition of $G_1\wr G_2$ with respect to $\Z_n$ (where we view $G_2$ as permutating $\Z_n$).  Then ${\cal C}\preceq{\cal D}$.  Let $D\in{\cal D}$ and $C\in{\cal C}$ such that $C\subseteq D$.  Observe that $C$ is a block of $\Stab_{G_1\wr G_2}(D)^D\cong G_2$, so $C$ is block of $G_1\wr G_2$ by \cite[Theorem 2.3.10]{Book}.  Hence ${\cal C}$ is a block system of $G_1\wr G_2$. Then $\fix_{G_1\wr G_2}({\cal C}) = \{(i,j)\mapsto (i,g_i(j)):g_i\in \fix_{G_2}({\cal B})\}$.  We see the set of orbits of $\fix_{G_1\wr G_2}({\cal C})$ is ${\cal C}$, and as ${\cal C}$ is a block system of $G_1\wr G_2$, $\fix_{G_1\wr G_2}({\cal C})\tl G_1\wr G_2$.  Thus ${\cal C}$ is a normal block system of $G_1\wr G_2$.
\end{proof}

\begin{thrm}\label{wreath 5/2 closed}
Let $G_1\le S_m$ and $G_2\le S_n$ be transitive.  Then $(G_1\wr G_2)^{(5/2)} = G_1^{(5/2)}\wr G_2^{(5/2)}$.
\end{thrm}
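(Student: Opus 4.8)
The plan is to prove the two inclusions $(G_1\wr G_2)^{(5/2)}\le G_1^{(5/2)}\wr G_2^{(5/2)}$ and $G_1^{(5/2)}\wr G_2^{(5/2)}\le (G_1\wr G_2)^{(5/2)}$ separately. For the forward inclusion, I would show that $G_1^{(5/2)}\wr G_2^{(5/2)}$ is itself a $5/2$-closed group containing $G_1\wr G_2$; since $(G_1\wr G_2)^{(5/2)}$ is by definition the intersection of all $5/2$-closed groups containing $G_1\wr G_2$, this inclusion follows immediately. The containment $G_1\wr G_2\le G_1^{(5/2)}\wr G_2^{(5/2)}$ is clear since $G_i\le G_i^{(5/2)}$. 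So the real content of the forward direction is to verify that the wreath product of two $5/2$-closed groups is $5/2$-closed.

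First I would establish that claim: if $H_1$ and $H_2$ are $5/2$-closed, then $H_1\wr H_2$ is $5/2$-closed. Here I expect to use Theorem \ref{quotient 5/2-closed} together with the structure of block systems of a wreath product. The lexi-partition ${\cal B}$ of $H_1\wr H_2$ corresponding to $Y$ is a normal block system with $(H_1\wr H_2)/{\cal B}\cong H_1$ and $\fix_{H_1\wr H_2}({\cal B})^{B}\cong H_2$ on each block. By Lemma \ref{wreath block}, normal block systems of $H_2$ lift to normal block systems of the wreath product, and one can analyze an arbitrary transitive subgroup $H$ and an arbitrary normal block system ${\cal C}$ of $H$ by comparing ${\cal C}$ to ${\cal B}$. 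The key technical step is to check the $5/2$-closure condition directly: given a transitive $H\le H_1\wr H_2$, a normal block system ${\cal C}_H$, its fixer block system ${\cal E}$, and $g\in H_1\wr H_2$ fixing each block of ${\cal C}_H$ inside a block $E\in{\cal E}$ setwise, I must produce $g\vert_E\in H_1\wr H_2$. The argument should split according to whether ${\cal C}_H$ refines ${\cal B}$, coincides with it, or is coarser, using the $5/2$-closure of $H_2$ to handle the ``within-block'' action and the $5/2$-closure of $H_1$ to handle the ``across-block'' action.

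For the reverse inclusion $G_1^{(5/2)}\wr G_2^{(5/2)}\le (G_1\wr G_2)^{(5/2)}$, I would exploit the $5/2$-closure condition applied to $G=(G_1\wr G_2)^{(5/2)}$ with the lexi-partition ${\cal B}$. Restricting the fixer $\fix_G({\cal B})$ to a single block, together with the defining property of $5/2$-closed groups that lets one act independently on the blocks in a fixer block class, should force the transitive constituent on each block to contain $G_2^{(5/2)}$; symmetrically, passing to the quotient $G/{\cal B}$ and using that quotients behave well should force the top group to contain $G_1^{(5/2)}$. Assembling an arbitrary element of $G_1^{(5/2)}\wr G_2^{(5/2)}$ from these pieces then shows it lies in $(G_1\wr G_2)^{(5/2)}$.

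The hard part will be the forward direction, specifically verifying the $5/2$-closure of $H_1\wr H_2$ when the normal block system ${\cal C}_H$ of the transitive subgroup $H$ is skew to the lexi-partition ${\cal B}$ --- that is, neither refining nor refined by it. In that case the fixer block system ${\cal E}$ can interact nontrivially with both the top and bottom coordinates, and one must carefully track how the $5/2$-closure hypotheses on $H_1$ and $H_2$ combine. I anticipate that Theorem \ref{quotient 5/2-closed} will be the right mechanism to reduce the skew case to the two extreme cases, by quotienting out ${\cal B}$ and applying the bottom-group hypothesis fiberwise, but making the bookkeeping of the fixer block systems match up across the quotient is where the main effort will lie.
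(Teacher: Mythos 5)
Your overall architecture matches the paper's: prove the forward inclusion by showing $G_1^{(5/2)}\wr G_2^{(5/2)}$ is itself $5/2$-closed, and prove the reverse inclusion by exploiting the $5/2$-closure condition on $(G_1\wr G_2)^{(5/2)}$ with respect to lifted block systems. However, there is a genuine gap at exactly the point you flag as the hard part. You plan to handle a ``skew'' case, in which the normal block system ${\cal C}_H$ of the transitive subgroup $H$ neither refines nor is refined by the lexi-partition, by invoking Theorem \ref{quotient 5/2-closed}. That theorem is the wrong tool twice over: first, it only produces conclusions about block systems ${\cal C}$ with ${\cal B}\preceq{\cal C}$, so it says nothing about a block system skew to the lexi-partition; second, it is a statement of the form ``if $G$ is $5/2$-closed then $G/{\cal B}$ is $5/2$-closed,'' whereas in the forward direction you are trying to \emph{establish} that $G_1^{(5/2)}\wr G_2^{(5/2)}$ is $5/2$-closed, so you cannot feed it into that theorem as a hypothesis. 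As written, your plan for the skew case does not go through.

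The paper's resolution is that the skew case never has to be confronted: one first replaces $H$ by the maximal subgroup $H'$ of $G_1^{(5/2)}\wr G_2^{(5/2)}$ admitting the given block system (the element $\gamma$ one must restrict, and the fixer block system one must compute, are unaffected by this enlargement), and then invokes a comparability result (\cite[Lemma 5]{DobsonM2015a}, or \cite[Theorem 4.3.7]{Book}) guaranteeing that the block system of such a maximal subgroup is comparable to the lexi-partition ${\cal C}$. Only the two nested cases ${\cal B}\preceq{\cal C}$ and ${\cal C}\prec{\cal B}$ remain, and these are handled essentially as you describe, via the Embedding Theorem and the $5/2$-closure of $G_2^{(5/2)}$ on a single block in the first case, and via the quotient by ${\cal C}$ and the $5/2$-closure of $G_1^{(5/2)}$ in the second. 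Your reverse inclusion is also thinner than the paper's: the needed content there is an explicit computation showing that the ${\cal D}$-fixer block system of the lifted block system ${\cal D}=\{\{i\}\times B\}$ in $G_1\wr H$ is $\{\{i\}\times E\}$ (because wreath stabilizers act transitively on blocks in other fibers), which is what legitimizes restricting to $\{0\}\times E$; but that part of your sketch is at least pointed in the right direction.
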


\begin{proof}
We will first show that $G_1^{(5/2)}\wr G_2^{(5/2)}$ is $5/2$-closed.  As $G_1\wr G_2\le G_1^{(5/2)}\wr G_2^{(5/2)}$, this will give that $(G_1\wr G_2)^{(5/2)}\le G_1^{(5/2)}\wr G_2^{(5/2)}$.  So let ${\cal B}$ be a normal block system of some transitive subgroup $H$ of $G_1^{(5/2)}\wr G_2^{(5/2)}$.  Let ${\cal E}$ be the ${\cal B}$-fixer block system of $H$, and $\gamma\in G_1\wr G_2$ fix $E\in{\cal E}$ and every block of ${\cal B}$ contained in $E$.  As ${\cal B}$ and $\gamma$ are arbitrary, by definition it suffices to show that $\gamma\vert_E\in G_1^{(5/2)}\wr G_2^{(5/2)}$.  Let ${\cal C}$ be the lexi-partition of $G_1^{(5/2)}\wr G_2^{(5/2)}$ with respect to $\Z_n$ (we consider $S_n$ as permutating the elements of $\Z_n$).  We may assume without loss of generality that $H$ is maximal with ${\cal B}$ a block system of $H$.  By \cite[Lemma 5]{DobsonM2015a} (or \cite[Theorem 4.3.7]{Book}), we see that ${\cal B}\preceq{\cal C}$ or ${\cal C}\prec {\cal B}$.  We will consider the two cases separately.  Set $G = G_1^{(5/2)}\wr G_2^{(5/2)}$.

Suppose first that ${\cal B}\preceq{\cal C}$.  Let $B\in{\cal B}$, $C\in{\cal C}$, and $E\in{\cal E}$ with $B\subseteq C$ and $B\subseteq E$.  As ${\cal C}$ is a block system of $G$, ${\cal C}$ is also a block system of $H$.  By the Embedding Theorem \cite[Theorem 4.3.1]{Book}, we see there is $L\le G$ that is permutation isomorphic to $(H/{\cal C})\wr(\Stab_H(C)^C)$.  Let ${\cal B}_C = \{B\in{\cal B}:B\subseteq C\}$.  As ${\cal B}$ is a normal block system of $H$, ${\cal B}_C$ is a normal block system of $\Stab_H(C)^C$.  By Lemma \ref{wreath block}, ${\cal B}$ is a normal block system of $L$.  By the assumption that $H$ is maximal with ${\cal B}$ a block system, we see $L\le H$.  So ${\cal E}\preceq{\cal C}$.  As $\gamma$ fixes $B$, $\gamma$ fixes $C$, and thus $\gamma^C\in G_2^{(5/2)}$.  Recall $\Stab_H(C)^C\le G_2$ and ${\cal B}_C$ is a normal block system of $\Stab_H(C)^C$.  Additionally, ${\cal E}_C = \{E\in {\cal E}:E\subseteq C\}$ is refined by the ${\cal B}_C$-fixer block system of $\Stab_H(C)^C$.  So $(\gamma^C)\vert_E\in G_2^{(5/2)}$.  Hence $\gamma\vert_E\in G_1^{(5/2)}\wr G_2^{(5/2)}$.

Suppose ${\cal C}\prec{\cal B}$, in which case ${\cal B}/{\cal C}$ is a normal block system of $H/{\cal B}$.  Let ${\cal E}_1$ be the ${\cal B}/{\cal C}$-fixer block system of $H/{\cal C}$.  As $h\vert_C\le G$ for every $h\in\Stab_H(C)$ and $C\in{\cal C}$, ${\cal E}/{\cal C} = {\cal E}_1$.  Then $(\gamma/{\cal C})\vert_{E/{\cal C}}\in G_1^{(5/2)}$.  As there exists $\hat{\gamma}\in G$ with $\hat{\gamma}(i,j) = ([(\gamma/{\cal C})\vert_{E/{\cal C}}]^{-1}(i),j)$, we see $\hat{\gamma}\gamma\in\fix_G({\cal C}) = 1_{S_m}\wr G_2^{(5/2)}$.  We conclude that $\gamma\vert_E\in G$.  Hence $G_1^{(5/2)}\wr G_2^{(5/2)}$ is $5/2$-closed, and so $(G_1\wr G_2)^{(5/2)}\le G_1^{(5/2)}\wr G_2^{(5/2)}$.

To see $G_1^{(5/2)}\wr G_2^{(5/2)}\le (G_1\wr G_2)^{(5/2)}$, let $H\le G_2$ be transitive and ${\cal B}$ a normal block system of $G_2$.  Then $G_1\wr H\le G_1\wr G_2$ and ${\cal D} = \{\{i\}\times B:i\in\Z_m, B\in{\cal B}\}$ is a normal block system of $G_1\wr G_2$ by Lemma \ref{wreath block}.  For $B\in{\cal B}$, let $D_B = \{0\}\times B$.  Fix $B\in{\cal B}$. Consider $\WStab_{G_1\wr H}(D_B)$.  Note that for any $D\in{\cal D}$ of the form $D = \{i\}\times B'$ for any $i\not = 0$ and $B\not = B'\in{\cal B}$, we have $\WStab_{G_1\wr H}(D_B)^D$ is transitive.  Also, $\WStab_{G_1\wr H}(D_B)^{D_{B'}} = 1$ if and only if $\WStab_H(B)^{B'} = 1$.  We conclude that if ${\cal E}$ is the ${\cal B}$-fixer block system of $G_2$, then the ${\cal D}$-fixer block system of $G_1\wr H$ is ${\cal F} = \{\{i\}\times E:i\in\Z_n, E\in{\cal E}\}$.

Now $\gamma\in G_2$ fixes $E\in{\cal E}$ and every block of ${\cal B}$ contained in $E$ if and only if there is $\hat{\gamma}\in G_1\wr H$ that fixes $\{0\}\times E$ and fixes every block of ${\cal D}$ contained in $\{0\}\times E$ ($\hat{\gamma}$ can be chosen to be, for example, the map which on $\{0\}\times E$ is $(0,e)\mapsto (0,\gamma(e))$ and is trivial everywhere else).  Additionally, $\gamma\vert_E\in G_2$ if and only if $\gamma\vert_{\{0\}\times E}\in G_1\wr G_2$.  We conclude that $G_1\wr G_2^{(5/2)}\le (G_1\wr G_2)^{(5/2)}$.

Now let $H\le G_1$ be transitive and ${\cal B}$ a normal block system of $H$. Then $H\wr G_2\le G_1\wr G_2$ and ${\cal D} = \{B\times \Z_n:B\in{\cal B}\}$ is a normal block system of $H\wr G_2$.  For $B\in{\cal B}$, let $D_B = B\times\Z_n$.  Fix $B\in{\cal B}$. Consider $\WStab_{H\wr G_2}(D_B)$.  Note $\WStab_{H\wr G_2}(D_B)^{D_{B'}} = 1$ if and only if $\WStab_H(B)^{B'} = 1$ and $\WStab_{H\wr G_2}(D_B)^{D_{B'}}$ is transitive if and only if $\WStab_H(B)^{B'}$ is transitive.  We conclude that if ${\cal E}$ is the ${\cal B}$-fixer block system of $H$, then the ${\cal D}$-fixer block system of $H\wr G_2$ is ${\cal F} = \{E\times\Z_n:E\in{\cal E}\}$.

Let ${\cal C}$ be the lexi-partition of $G_1\wr G_2$ with respect to $\Z_n$ (so ${\cal C} = \{\{i\}\times\Z_n:i\in\Z_m\})$.  Now, $\gamma\in H$ fixes $E\in{\cal E}$ and every block of ${\cal B}$ contained in $E$ if and only if there is $\tilde{\gamma}\in H\wr G_2$ such that $\tilde{\gamma}/{\cal C} = \gamma$.  Then $\gamma\vert_E\in G_1^{(5/2)}$ if and only if $\tilde{\gamma}\vert_{E\times \Z_n}\in G_1^{(5/2)}\wr G_2$.  We conclude that $G_1^{(5/2)}\wr G_2\le (G_1\wr G_2)^{(5/2)}$, and so $G_1^{(5/2)}\wr G_2^{(5/2)}\le (G_1\wr G_2)^{(5/2)}$.  The result follows.
\end{proof}

\begin{definition}
Let $G\le S_m$ be a group, and $n$ a positive integer.  By $G\wr_nG$, we mean the group $G\wr G\wr\cdots\wr G$, where the wreath product is performed $n$ times.
\end{definition}

The next result generalizes \cite[Exercise 5.28]{Wielandt1969}.

\begin{lem}\label{5/2 closure of p-group}
Let $p$ be prime, $n$ a positive integer, and $P\le S_{p^n}$ be a transitive $p$-group.  Then $P^{(5/2)}$ is a $p$-group.
\end{lem}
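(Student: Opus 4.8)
The plan is to prove the statement by induction on $n$, using Theorem~\ref{wreath 5/2 closed} together with Sylow's theorem to reduce an arbitrary transitive $p$-group to the standard Sylow $p$-subgroup of $S_{p^n}$. Write $W_k$ for the Sylow $p$-subgroup of $S_{p^k}$, realized (by Kaloujnine's theorem) as the iterated wreath product $(\Z_p)_L\wr\cdots\wr(\Z_p)_L$ of $k$ copies of the regular representation of $\Z_p$ acting on $p$ points; its order $p^{(p^k-1)/(p-1)}$ matches the $p$-part of $(p^k)!$, so this is indeed a full Sylow $p$-subgroup. By the associativity of the permutational wreath product, $W_k$ is permutation isomorphic to $(\Z_p)_L\wr W_{k-1}$, with $W_{k-1}$ transitive of degree $p^{k-1}$; this recursive description is the structural fact driving the induction.

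For the base case $n=1$, a transitive $p$-group $P\le S_p$ has order $p$, so $P=(\Z_p)_L$, and by the example showing that every group of prime degree is $5/2$-closed we have $P^{(5/2)}=P$, a $p$-group.

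For the inductive step, let $P\le S_{p^n}$ be a transitive $p$-group with $n\ge 2$. First I would invoke Sylow's theorem: $P$ is a $p$-subgroup of $S_{p^n}$, hence lies in some Sylow $p$-subgroup, and all such are conjugate to $W_n$. Since the $5/2$-closure is defined entirely in terms of permutation-group data (block systems, transitive constituents, and wreath stabilizers), it is invariant under conjugation in $S_{p^n}$; so after relabelling we may assume $P\le W_n$. By the monotonicity lemma (if $H\le G$ are transitive then $H^{(5/2)}\le G^{(5/2)}$), it then suffices to show that $W_n^{(5/2)}$ is a $p$-group. Writing $W_n=(\Z_p)_L\wr W_{n-1}$ and applying Theorem~\ref{wreath 5/2 closed}, we obtain $W_n^{(5/2)}=(\Z_p)_L^{(5/2)}\wr W_{n-1}^{(5/2)}$. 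Here $(\Z_p)_L^{(5/2)}=(\Z_p)_L$ by the base case, while $W_{n-1}^{(5/2)}$ is a $p$-group by the inductive hypothesis applied to the transitive $p$-group $W_{n-1}$ of degree $p^{n-1}$. Finally, a wreath product $A\wr B$ of two $p$-groups is again a $p$-group, since $\abs{A\wr B}=\abs{A}\cdot\abs{B}^{\deg A}$ is a power of $p$ in this convention; hence $W_n^{(5/2)}$, and therefore $P^{(5/2)}$, is a $p$-group, completing the induction.

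The genuinely new content is already packaged in Theorem~\ref{wreath 5/2 closed}, so the remaining work is bookkeeping rather than a serious obstacle. The two points I would take care over are (i) justifying the permutation isomorphism $W_n\cong(\Z_p)_L\wr W_{n-1}$, so that Theorem~\ref{wreath 5/2 closed} applies with both factors transitive; and (ii) recording explicitly that the $5/2$-closure commutes with conjugation, namely $(gPg^{-1})^{(5/2)}=gP^{(5/2)}g^{-1}$ for $g\in S_{p^n}$, which is what lets us pass from an arbitrary transitive $p$-group to the fixed Sylow subgroup $W_n$. Step (ii) is the one most easily left implicit, and it is immediate from the conjugation-equivariance of the conditions defining a $5/2$-closed group.
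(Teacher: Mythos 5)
Your proof is correct and follows essentially the same route as the paper's: reduce to the Sylow $p$-subgroup $\Z_p\wr_n\Z_p$ of $S_{p^n}$ via Sylow's theorem and the monotonicity lemma, then apply Theorem~\ref{wreath 5/2 closed} inductively to the iterated wreath product. The only cosmetic difference is that the paper concludes directly that $(\Z_p\wr_n\Z_p)^{(5/2)}=\Z_p\wr_n\Z_p$, whereas you settle for the (sufficient) weaker conclusion that it is a $p$-group.
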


\begin{proof}
If $n = 1$, then $P\cong(\Z_p)_L$ and has no nontrival normal block systems.  Hence $P^{(5/2)} = P$.  So we assume $n\ge 2$.  As $P$ is a transitive $p$-group, it is contained in a Sylow $p$-subgroup of $S_{p^n}$ which by \cite[\S 2]{Passman1968} is isomorphic to $\Z_p\wr_n\Z_p$.  So we assume $P\le \Z_p\wr_n\Z_p$.  It is clear from the definition of a $5/2$-closed group, that if $H\le G$ are transitive groups, then $H^{(5/2)}\le G^{(5/2)}$.  Inductively applying Theorem \ref{wreath 5/2 closed} we see that $(\Z_p\wr_n\Z_p)^{(5/2)}$ is $5/2$-closed.  Then $P^{(5/2)}\le (\Z_p\wr_n\Z_p)^{(5/2)} = \Z_p\wr_n\Z_p$ and $P^{(5/2)}$ is a $p$-group.
\end{proof}

\section{$5/2$-closed groups of odd prime-power order that contain a regular cyclic subgroup}

In this section, we will give an application of the work done already and determine the Sylow $p$-subgroups of $5/2$-closed permutation groups of degree $p^n$ that contain a regular cyclic subgroup, where $p$ is an odd prime and $n\ge 1$.  These turn out to be exactly the same as the Sylow $p$-subgroups of $2$-closed permutation groups of degree $p^n$ that contain a regular cyclic subgroup \cite{Dobson2023epreprint}.  While the Sylow $p$-subgroups of $2$-closed permutation groups of degree $p^n$ that contain a regular cyclic subgroup are not in the literature, they have been known for some time.  There are two independent proofs of this result.  Chronologically, the first is due to Klin and P\"oschel \cite{KlinP1981}, using the method of Schur.  The second, using permutation group theoretic techniques, can be found in \cite{Dobson1995}.  This application is chosen for this paper for two reasons.  First, it is well past time that a proof of the classification of automorphism groups of circulant digraphs of order $p^n$ appears in the literature (and this will not be completed until \cite{Dobson2023epreprint}).  Second, and from the point of view of this paper most importantly, is that this is the clearest and easiest application of the inductive techniques developed in this paper that the author is aware of.

We begin with a final general result, and then restrict our discussion to transitive permuation groups of degree $p^n$ that contain a regular cyclic subgroup.

\begin{lem}\label{E F relationship}
Let $G\le S_n$ be transitive with normal block systems ${\cal B}\prec{\cal C}$.  Let ${\cal E}$ be the ${\cal B}$-fixer block system of $G$ and ${\cal F}/{\cal B}$ the ${\cal C}/{\cal B}$-fixer block system of $G/{\cal B}$.  Then ${\cal C}\preceq{\cal E}$ or ${\cal E}\prec{\cal F}$.
\end{lem}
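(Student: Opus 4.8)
The plan is to prove the dichotomy by recording two easy refinements and then reducing everything to the single claim $\mathcal{E}\preceq\mathcal{F}$, which I establish by a contradiction driven by the conjugacy of wreath stabilizers.

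First I would note the two general containments $\mathcal{B}\preceq\mathcal{E}$ and $\mathcal{C}\preceq\mathcal{F}$. The former is Lemma \ref{equiv properties}. For the latter, $\mathcal{F}/\mathcal{B}$ is a fixer block system of $G/\mathcal{B}$, so $\mathcal{C}/\mathcal{B}\preceq\mathcal{F}/\mathcal{B}$, and passing back up through Lemma \ref{quotient normal} gives $\mathcal{C}\preceq\mathcal{F}$. If the first alternative fails, that is $\mathcal{C}\not\preceq\mathcal{E}$, it suffices to prove $\mathcal{E}\prec\mathcal{F}$. Here $\mathcal{E}\neq\mathcal{F}$ is immediate, since $\mathcal{E}=\mathcal{F}$ would give $\mathcal{C}\preceq\mathcal{F}=\mathcal{E}$, contrary to assumption; so the whole statement reduces to proving $\mathcal{E}\preceq\mathcal{F}$.

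Suppose $\mathcal{E}\preceq\mathcal{F}$ fails. Then there are $\mathcal{B}$-blocks with $B\equiv B'$ whose containing $\mathcal{C}$-blocks $C\supseteq B$ and $C'\supseteq B'$ satisfy $C/\mathcal{B}\not\equiv C'/\mathcal{B}$. Applying the remark following Definition \ref{automorphismtooldef} to the transitive group $G/\mathcal{B}$ with normal block system $\mathcal{C}/\mathcal{B}$, the group $\WStab_{G/\mathcal{B}}(C/\mathcal{B})$ is transitive on $C'/\mathcal{B}$. Set $W=\WStab_G(B)=\WStab_G(B')$. For an arbitrary $\mathcal{B}$-block $\tilde B\subseteq C'$, choose $\bar h\in\WStab_{G/\mathcal{B}}(C/\mathcal{B})$ with $\bar h(B')=\tilde B$ and lift it to $h\in\fix_G(\mathcal{C})$; because $\bar h$ is trivial on $C/\mathcal{B}$, such an $h$ fixes every $\mathcal{B}$-block of $C$ setwise, and in particular $h(B)=B$ as $B\subseteq C$. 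Then Lemma \ref{WStabs conjugate} gives $hWh^{-1}=\WStab_G(h(B))=W$, and applying the same lemma to $h(B')=\tilde B$ yields $\WStab_G(\tilde B)=hWh^{-1}=W$. Hence $\tilde B\equiv B'$, and since $\tilde B$ was an arbitrary $\mathcal{B}$-block of $C'$, all $\mathcal{B}$-blocks of $C'$ are $\equiv$-equivalent.

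The contradiction now comes from the standing hypothesis $\mathcal{C}\not\preceq\mathcal{E}$, which provides a single $\mathcal{C}$-block containing two inequivalent $\mathcal{B}$-blocks. Since $\equiv$ is a $G$-congruence (Lemma \ref{equiv properties}) and $G$ is transitive on $\mathcal{C}$, every $\mathcal{C}$-block, in particular $C'$, then contains two inequivalent $\mathcal{B}$-blocks, contradicting the conclusion of the previous paragraph. Therefore $\mathcal{E}\preceq\mathcal{F}$, and with $\mathcal{E}\neq\mathcal{F}$ this gives $\mathcal{E}\prec\mathcal{F}$. I expect the crux to be exactly this transfer step: quotient-level inequivalence of $C$ and $C'$ concerns only how $G/\mathcal{B}$ permutes $\mathcal{B}$-blocks, yet through the conjugacy of wreath stabilizers it forces all $\mathcal{B}$-blocks inside $C'$ to share the single wreath stabilizer $W$, and it is this forced bottom-level equivalence that clashes with the hypothesis.
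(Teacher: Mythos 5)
Your proof is correct and follows essentially the same route as the paper: assume ${\cal C}\not\preceq{\cal E}$, use the quotient-level transitivity of $\WStab_{G/{\cal B}}$ coming from $C/{\cal B}\not\equiv C'/{\cal B}$ together with the conjugacy of wreath stabilizers (Lemma \ref{WStabs conjugate}) to force an entire ${\cal C}$-block into a single $\equiv$-class, and derive a contradiction with ${\cal C}\not\preceq{\cal E}$ spread around by transitivity. You merely make explicit the lifting-and-conjugation computation that the paper compresses into ``Then $E$ contains every block of ${\cal B}$ contained in $C$,'' which is a welcome amount of added detail rather than a different argument.
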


\begin{proof}
We assume that ${\cal C}$ does not refine ${\cal E}$.  Let $C\in {\cal C}$ and $E\in{\cal E}$ such that $C\cap E\not = \emptyset$.  Our assumption ensures that $E$ does not contain every block of ${\cal B}$ contained in $C$.  Now, if $E$ intersects two different blocks of ${\cal F}$, then there is a block $C'\in{\cal C}$ with $C'\not\subseteq E$ such that the induced action of $\WStab_{G/{\cal B}}(C'/{\cal B})$ on $C/{\cal B}$ is transitive.  Then $E$ contains every block of ${\cal B}$ contained in $C$, a contradiction.  Hence $E$ intersects only one block of ${\cal F}$, and ${\cal E}\preceq{\cal F}$.  Note that if ${\cal E} = {\cal F}$, then ${\cal C}\preceq{\cal F} = {\cal E}$, so ${\cal C}\preceq{\cal E}$ or ${\cal E}\prec{\cal F}$.
\end{proof}

Define $\tau_n:\Z_{p^n}\to\Z_{p^n}$ by $\tau(i) = i + 1\ ({\rm mod\ } p^n)$.  Thus $\tau_n$ generates $(\Z_{p^n})_L$. As $\la\tau_n\ra$ is cyclic of order $p^n$, $\la\tau_n\ra$ contains a unique normal subgroup of order $p^i$, for each $0\le i\le n$, namely $\la\tau_n^{p^{n-i}}\ra$.  Thus, for $0\le i\le n$, the group $\la\tau_n\ra$ has a (possibly trivial) block system ${\cal B}_i$ of $p^{n-i}$ blocks of size $p^{i}$, that is the set of orbits of $\la\tau_n^{p^{n-i}}\ra$.  Note that these block systems are the only block systems of $\la\tau_n\ra$, and hence if $\la\tau_n\ra\le G\le S_{p^n}$ and ${\cal B}$ is a block system of $G$, then ${\cal B} = {\cal B}_i$ for some $0\le i\le n$.  Furthermore, the blocks of ${\cal B}_i$ are cosets of the unique subgroup of $\Z_{p^n}$ of order $p^{i}$.   Thus if $B_{i,j} = j + p^{n-i}\Z_{p^{i}}$, where $j\in\Z_{p^{n-i}}$, then  ${\cal B}_i = \{B_{i,j}:j\in\Z_{p^{n-i}}\}$.  Also observe that $${\cal B}_0\prec{\cal B}_{1}\prec\ldots\prec{\cal B}_{n-1}\prec{\cal B}_n.$$

The next result is a special case of \cite[Lemma 23]{Dobson2008a}.

\begin{lem}\label{bottom}
Let $p$ be an odd prime, $n\ge 2$, and $P\le S_{p^n}$ be a transitive $p$-group that contains $\tau_n$.  If $P\not = \tau_n$, then $\vert\fix_P({\cal B})\vert\ge p^2$.
\end{lem}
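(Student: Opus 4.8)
The plan is to prove this by induction on $n$, writing $z := \tau_n^{p^{n-1}}$ and taking ${\cal B} = {\cal B}_1$ to be the bottom block system, whose blocks are the $p^{n-1}$ cosets of the order-$p$ subgroup of $\Z_{p^n}$. Since $z$ adds $p^{n-1}$, it fixes each block of ${\cal B}_1$ setwise and acts on it as a $p$-cycle, so $\la z\ra\le\fix_P({\cal B}_1)$ and the fixer is always nontrivial; the whole content is to manufacture a second independent element. For the base case $n=2$ this is immediate: ${\cal B}_1$ has $p$ blocks, so $P/{\cal B}_1$ is a transitive $p$-group of degree $p$, hence cyclic of order $p$, and $\abs{\fix_P({\cal B}_1)} = \abs{P}/p\ge p^2$ because $P\neq\la\tau_2\ra$ forces $\abs{P}\ge p^3$.

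For the inductive step I would argue by contradiction, assuming $\fix_P({\cal B}_1) = \la z\ra$ has order exactly $p$. First, as $\fix_P({\cal B}_1)\tl P$ has order $p$, the conjugation action of the $p$-group $P$ on it factors through $\Aut(\Z_p)$, whose order is prime to $p$, so the action is trivial and $z$ is central in $P$. Since $z$ is a product of $p^{n-1}$ disjoint $p$-cycles (one on each block of ${\cal B}_1$), its centralizer is $C_{S_{p^n}}(z) = \Z_p\wr S_{p^{n-1}}$, and $P$ lies inside it acting on each $p$-cycle by translation; consequently $\bar P := P/\la z\ra$ is a transitive $p$-group of degree $p^{n-1}$ on the blocks of ${\cal B}_1$ that contains the regular cyclic group $\la\tau_n\ra/\la z\ra\cong\Z_{p^{n-1}}$.

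Next I would pass to the top block system ${\cal B}_{n-1}$, whose $p$ blocks have size $p^{n-1}$, and set $A := \fix_P({\cal B}_{n-1})$. Because $\la\tau_n\ra$ permutes these $p$ blocks regularly, no nonidentity block-permutation fixes a block, so $\Stab_P(B) = A$ for every $B\in{\cal B}_{n-1}$; thus $A$ embeds into the product of its constituents $A^B$, each a transitive $p$-group of degree $p^{n-1}$ containing $(\tau_n^p)^B\cong\Z_{p^{n-1}}$ regular. Now split into two cases. If every $A^B$ is cyclic, i.e. equals $\Z_{p^{n-1}}$, then $A$ is abelian of exponent dividing $p^{n-1}$, and one checks that $\Omega_1(A) = \fix_P({\cal B}_1) = \la z\ra$ (an $a\in A$ has $a^p=1$ exactly when each $a^B$ lies in the order-$p$ subgroup of $A^B$, which is precisely the sub-block fixer). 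An abelian $p$-group with a unique subgroup of order $p$ is cyclic, so $A$ is cyclic, and since $A\supsetneq\la\tau_n^p\ra\cong\Z_{p^{n-1}}$ (as $P\neq\la\tau_n\ra$ makes $\abs{A}=\abs{P}/p>p^{n-1}$) it is cyclic of order at least $p^n$ — impossible inside an exponent-$p^{n-1}$ group. This contradiction closes the case.

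The remaining case, that some constituent $A^{B_0}$ is non-cyclic, is where I expect the real difficulty. Here the inductive hypothesis applied to $A^{B_0}$ (non-cyclic, hence unequal to its regular cyclic subgroup, after relabelling its block as $\Z_{p^{n-1}}$) produces a subgroup of order at least $p^2$ in $A^{B_0}$ fixing each size-$p$ sub-block of $B_0$. The task is to convert this \emph{local} fixer into a genuine element of the global $\fix_P({\cal B}_1)$ lying outside $\la z\ra$, exploiting that $\tau_n$ cyclically permutes the $p$ top blocks and conjugates the constituents $A^{B_i}$, together with their sub-block fixers, into one another. Showing that the $\tau_n$-invariant subdirect subgroup $A\le\prod_i A^{B_i}$ must meet the product of these sub-block fixers in more than $\la z\ra$ is the main obstacle, and is essentially the heart of \cite[Lemma 23]{Dobson2008a}; I expect it to require a careful analysis of how the regular cyclic subgroup $\la\tau_n\ra$ threads through the wreath structure $\Z_p\wr S_{p^{n-1}}$, since the diagonal case (where the constituents are forced to agree through $\tau_n$) is exactly what keeps the intersection as large as a single copy of the local fixer.
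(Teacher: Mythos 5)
Your proposal is incomplete: the entire second case of your induction (some constituent $A^{B_0}$ non-cyclic) is left unproven, and you say so yourself, calling the passage from the local fixer in $A^{B_0}$ to an element of the global $\fix_P({\cal B}_1)$ outside $\la z\ra$ ``the main obstacle.'' That worry is justified --- $\fix_A({\cal B}_1)$ is the intersection over all $p$ top blocks $B_i$ of the preimages in $A$ of the local fixers $\fix_{A^{B_i}}({\cal B}_1^{B_i})$, and a subgroup of index $p^r$ in each of $p$ factors can intersect in something as small as $\la z\ra$; nothing you have written rules this out. So what you have is a correct base case, a correct and rather pretty disposal of the all-constituents-cyclic case (via $\Omega_1(A)=\la z\ra$ forcing $A$ cyclic of order $\ge p^n$ inside an exponent-$p^{n-1}$ group), and a hole exactly where the content of the lemma lives. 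For comparison, the paper offers no proof at all: it states the lemma as a special case of \cite[Lemma 23]{Dobson2008a}, so there is no argument of the paper's to match yours against.

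The lemma does admit a short, complete, direct proof that avoids your case split entirely, and it is worth knowing. Since $P$ is a $p$-group and $\la\tau_n\ra<P$ is proper, normalizers grow, so there is $\gamma\in N_P(\la\tau_n\ra)\setminus\la\tau_n\ra$; after multiplying by a power of $\tau_n$ you may assume $\gamma(0)=0$. Then $\gamma\tau_n\gamma^{-1}=\tau_n^a$ forces $\gamma(x)=ax$ for all $x\in\Z_{p^n}$, and since $\gamma\ne 1$ has $p$-power order, $a=1+cp^k$ with $p\nmid c$ and $1\le k\le n-1$, so $a$ has multiplicative order $p^{n-k}$ modulo $p^n$. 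As $p$ is odd, $a^{p^{n-k-1}}\equiv 1+c'p^{n-1}\pmod{p^n}$ with $p\nmid c'$, so $\beta=\gamma^{p^{n-k-1}}$ is multiplication by $1+c'p^{n-1}$: it is nontrivial, fixes $0$, and satisfies $\beta(x)-x\in p^{n-1}\Z_{p^n}$ for all $x$, hence fixes every block of ${\cal B}_1$ setwise. Since $\beta$ has a fixed point and $z=\tau_n^{p^{n-1}}$ does not, $\la z,\beta\ra\le\fix_P({\cal B}_1)$ has order at least $p^2$. I would recommend replacing your inductive scheme with this argument, or at minimum supplying a genuine proof of your second case before regarding the lemma as established.
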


\begin{lem}\label{p^kfixers}
Let $p$ be an odd prime, $n\ge 2$, and $P\le S_{p^n}$ be a transitive $p$-group that contains $\tau_n$.  Let ${\cal E}_2/{\cal B}_1$ be the ${\cal B}_2/{\cal B}_1$-fixer block system of $P$.  Then ${\cal E}_1\preceq{\cal E_2}$.
\end{lem}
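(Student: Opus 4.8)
The plan is to reduce the claim to a statement about how powers of $\tau_n$ act on the relevant fixers, using Lemma \ref{E F relationship} to dispose of the generic case. Since $\la\tau_n\ra\le P$, every block system of $P$ is one of ${\cal B}_0\prec\cdots\prec{\cal B}_n$, so I may write ${\cal E}_1={\cal B}_a$ and ${\cal E}_2={\cal B}_b$; because ${\cal E}_2/{\cal B}_1$ is a fixer block system for ${\cal B}_2/{\cal B}_1$ it is coarser than ${\cal B}_2/{\cal B}_1$, whence ${\cal B}_2\preceq{\cal E}_2$ and $b\ge 2$. The goal ${\cal E}_1\preceq{\cal E}_2$ is then exactly $a\le b$. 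First I would apply Lemma \ref{E F relationship} to the pair ${\cal B}_1\prec{\cal B}_2$ of normal block systems of $P$: it yields either ${\cal E}_1\prec{\cal E}_2$, in which case $a<b$ and we are done, or ${\cal B}_2\preceq{\cal E}_1$, i.e. $a\ge 2$. Thus $a=2$ is immediate from $b\ge 2$, and the content lies entirely in the case ${\cal B}_2\preceq{\cal E}_1$ with $a\ge 3$, where I must still produce $a\le b$.

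In that case I would pass to an explicit algebraic description of the two fixer block systems. Since the blocks of ${\cal B}_1$ and of ${\cal B}_2/{\cal B}_1$ have prime size $p$ and $P$ is a $p$-group, the transitive constituents $\fix_P({\cal B}_1)^B$ and $\fix_{P/{\cal B}_1}({\cal B}_2/{\cal B}_1)^{C/{\cal B}_1}$ are transitive $p$-groups of degree $p$ (they contain the restrictions of $\tau_n^{p^{n-1}}$, resp. of $\tau_n^{p^{n-2}}\bmod {\cal B}_1$, each a $p$-cycle on its block), hence cyclic and in particular quasiprimitive; by Lemma \ref{equivalence relation exists} the wreath stabilizers coincide with the pointwise stabilizers, and $\equiv$ with $\sim$, at both levels. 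Consequently $F_1:=\fix_P({\cal B}_1)$ embeds into $\prod_{B\in{\cal B}_1}F_1^B\cong\prod\Z_p$ and likewise $\bar F_2:=\fix_{P/{\cal B}_1}({\cal B}_2/{\cal B}_1)=\fix_P({\cal B}_2)/F_1$, so both are elementary abelian. Normalizing the coordinate functionals by the transitive shifts $\tau_n^{p^{n-1}}\in F_1$ and $\tau_n^{p^{n-2}}\bmod F_1\in\bar F_2$, I would check that two blocks of ${\cal B}_1$ (resp. of ${\cal B}_2/{\cal B}_1$) are equivalent precisely when the corresponding functionals agree, and that $\tau_n$ permutes these functionals by the index shift. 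This identifies $a$ and $b$: $p^{n-a}$ is the least power of $\tau_n$ whose conjugation action centralizes $F_1$, and $p^{n-b}$ is the least power centralizing $\bar F_2$.

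With this translation, $a\le b$ becomes the implication that any power of $\tau_n$ centralizing $F_1$ also centralizes $\bar F_2=\fix_P({\cal B}_2)/F_1$. I would prove this by analysing $\fix_P({\cal B}_2)$ one ${\cal B}_2$-block at a time: on a block of size $p^2$ it is a transitive $p$-group containing the $p^2$-cycle $\tau_n^{p^{n-2}}$, hence lies in $\Z_p\wr\Z_p$ with base group the ${\cal B}_1$-part of $F_1$ and top generated by the rotation of the $p$ sub-blocks, the relation $(\tau_n^{p^{n-2}})^p=\tau_n^{p^{n-1}}$ tying the top to the diagonal of the base; Lemma \ref{bottom} keeps these local pictures nondegenerate, and the global structure is glued by $\tau_n$-conjugation. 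I expect this implication to be the main obstacle: centralizing a normal subgroup does not in general force centralizing the quotient, so the argument must genuinely exploit that $F_1$ and $\fix_P({\cal B}_2)$ are fixers inside a transitive $p$-group containing the full cycle $\tau_n$, using the cyclic relation between the rotation $\tau_n^{p^{n-2}}$ and its $p$-th power $\tau_n^{p^{n-1}}\in F_1$ to transport the centralizing condition from the base $F_1$ up to the quotient $\bar F_2$.
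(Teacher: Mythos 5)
Your reduction is sound as far as it goes: invoking Lemma \ref{E F relationship} to dispose of the case ${\cal E}_1\prec{\cal E}_2$, noting ${\cal B}_2\preceq{\cal E}_2$ handles ${\cal E}_1={\cal B}_2$, and the dictionary identifying ${\cal E}_1={\cal B}_a$ and ${\cal E}_2={\cal B}_b$ with the largest $j$ such that $\tau_n^{p^{n-j}}$ centralizes $F_1=\fix_P({\cal B}_1)$, respectively $\bar F_2=\fix_P({\cal B}_2)/{\cal B}_1$, is a correct (and rather clean) reformulation --- this matches the opening of the paper's proof. But the statement you yourself flag as ``the main obstacle'' is the entire content of the lemma, and your sketch for it does not contain the idea that makes it work. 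The paper's argument in the remaining case ${\cal B}_2\prec{\cal E}_1$ runs as follows: since all ${\cal B}_1$-blocks inside a ${\cal B}_2$-block $B_2$ are $\equiv$-equivalent, $F_1^{B_2}$ is exactly the diagonal $\la\tau_n^{p^{n-1}}\ra^{B_2}$; hence for every $\gamma\in\fix_P({\cal B}_2)$ the commutator $\gamma^{-1}\tau_n^{p^{n-2}}\gamma\tau_n^{-p^{n-2}}$ lies in $F_1$ and restricts on $B_2$ into $\la\tau_n^{p^{n-1}}\ra^{B_2}$, so $\gamma^{B_2}$ normalizes the regular cyclic group $\la\tau_n^{p^{n-2}}\ra^{B_2}$. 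This bounds $\fix_P({\cal B}_2)^{B_2}$ by the Sylow $p$-subgroup of $N_{S_{p^2}}((\Z_{p^2})_L)$, of order $p^3$, so it is not $\Z_p\wr\Z_p$, and then \cite[Lemma 4]{DobsonW2002} shows it has no regular elementary abelian subgroup; consequently any $\gamma$ whose image $\gamma/{\cal B}_1$ is nontrivial on $B_2/{\cal B}_1$ must restrict to a $p^2$-cycle on $B_2$. It is this forced $p^2$-cycle that lets the $p$-th power map $\gamma\mapsto\gamma^p\in F_1$ transport the level-two wreath stabilizers into $F_1$, which is exactly your missing implication.

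None of this is recoverable from your local picture as described. Two concrete problems: (i) you take the base of your local $\Z_p\wr\Z_p$ to be ``the ${\cal B}_1$-part of $F_1$,'' but the subgroup of $\fix_P({\cal B}_2)^{B_2}$ fixing each ${\cal B}_1$-block of $B_2$ setwise need not equal $F_1^{B_2}$, since $P$ is not assumed $5/2$-closed here and an element may fix the ${\cal B}_1$-blocks inside $B_2$ without fixing those outside it; the hypothesis ${\cal B}_2\prec{\cal E}_1$ controls $F_1^{B_2}$ only, so you must argue (as above, via the commutator) before you can constrain $\fix_P({\cal B}_2)^{B_2}$. (ii) Lemma \ref{bottom} does not ``keep the local pictures nondegenerate'' in any useful sense --- if anything it pushes in the opposite direction, asserting the local fixer is large when the local group is not cyclic --- and the external input actually needed is the nonexistence of a regular elementary abelian subgroup, i.e.\ \cite[Lemma 4]{DobsonW2002}, which nothing in your outline supplies. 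Without the $p^2$-cycle step, $\fix_P({\cal B}_2)^{B_2}$ could a priori be $\Z_p\times\Z_p$, in which case $\gamma^p$ forgets $\gamma/{\cal B}_1$ entirely and no relation between the centralizers of $F_1$ and of $\bar F_2$ follows.
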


\begin{proof}
By Lemma \ref{E F relationship}, we see that either ${\cal B}_2\preceq{\cal E}_1$ or ${\cal E_1}\prec{\cal E}_2$.  We may assume that ${\cal E}_1\not = {\cal B}_2$ as otherwise the result is trivially true as ${\cal B}_2\preceq{\cal E}_2$.  Suppose ${\cal B}_2\prec{\cal E}_1$.  As $p$ is odd, $\Z_p\wr\Z_p$ has order $p^{p+1} > p^3$.  Let $\gamma\in\fix_P({\cal B}_2)$, and $B_2\in{\cal B}_2$.  Set $\delta = \gamma^{-1}\tau_n^{p^{n-2}}\gamma\tau_n^{-p^{n-2}}$.  As $\fix_P({\cal B}_2)/{\cal B}_1$ is elementary abelian, we see that $\delta\in\fix_P({\cal B}_1)$.  As ${\cal B}_2\prec{\cal E}_1$, we see that $\delta^{B_2}\in \la\tau_n^{p^{n-1}}\ra^{B_2}$.  Hence $\gamma^{B_2}$ normalizes $\la\tau_n^{p^{n-2}}\ra^{B_2}$.  As $N_{S_{p^2}}((\Z_{p^2})_L)$ has a Sylow $p$-subgroup of order $p^3$, we see that $\fix_P({\cal B}_2)^{B_2}\not\cong\Z_p\wr\Z_p$.  As $\fix_P({\cal B}_2)$ contains the regular cyclic $\la\tau_n^{p^{n-2}}\ra^{B_2}$, it follows by \cite[Lemma 4]{DobsonW2002} that $\fix_P({\cal B}_2)^{B_2}$ does not contain a regular elementary abelian subgroup.  Thus if $\gamma^{B_2}$ is a $p$-cycle on the $p$ blocks of ${\cal B}_1$ contained in $B_2$, then it must be the case that $\gamma^{B_2}$ is a $p^2$-cycle as otherwise $\la \gamma,\tau_n^{p^{n-1}}\ra^{B_2}$ is a regular subgroup isomorphic to $\Z_p\times\Z_p$.  Hence $\gamma^p\in\fix_P({\cal B}_1)$, and $(\gamma^p)^{B_2} = 1$ if and only if $(\gamma/{\cal B}_1)^{B_2/{\cal B}_1} = 1$.  Thus ${\cal E}_1\preceq{\cal E}_2$.
\end{proof}

\begin{lem}\label{p^kfixercor}
Let $p$ be an odd prime, $n\ge 2$, and $P\le S_{p^n}$ be a transitive group that contains $\tau_n$.  Let ${\cal E}_i/{\cal B}_{i-1}$ be the ${\cal B}_i/{\cal B}_{i -1}$-fixer block system of $P/{\cal B}_{i - 1}$, $1\le i\le n$.  Then ${\cal E}_{i-1}\preceq{\cal E_i}$ for every $1\le i\le n$.
\end{lem}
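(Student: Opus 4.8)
The plan is to reduce the statement, one level at a time, to Lemma \ref{p^kfixers}, which is exactly the two--bottom--level case. The case $i=1$ is immediate, since ${\cal B}_0$ is the partition into singletons, so ${\cal E}_0={\cal B}_0$ refines every block system and ${\cal E}_0\preceq{\cal E}_1$. For $2\le i\le n$ I would pass to the quotient $\bar P=P/{\cal B}_{i-2}$: it is transitive of degree $p^{\,n-i+2}\ge p^2$, it contains the regular cyclic subgroup generated by the image of $\tau_n$, and its two smallest nontrivial block systems are $\bar{\cal B}={\cal B}_{i-1}/{\cal B}_{i-2}$ and $\bar{\cal C}={\cal B}_i/{\cal B}_{i-2}$, with blocks of size $p$ and $p^2$. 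Since $\bar P/\bar{\cal B}=P/{\cal B}_{i-1}$ and $\bar{\cal C}/\bar{\cal B}={\cal B}_i/{\cal B}_{i-1}$, unwinding the definitions shows that ${\cal E}_{i-1}/{\cal B}_{i-2}$ and ${\cal E}_i/{\cal B}_{i-2}$ are precisely the first and second relative fixer block systems of $\bar P$ (the ones called ${\cal E}_1$ and ${\cal E}_2$ in Lemma \ref{p^kfixers}). That lemma then gives ${\cal E}_{i-1}/{\cal B}_{i-2}\preceq{\cal E}_i/{\cal B}_{i-2}$, i.e.\ ${\cal E}_{i-1}\preceq{\cal E}_i$.

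The obstacle is that Lemma \ref{p^kfixers} requires a $p$-group, whereas $P$, and hence $\bar P$, is only transitive. I would remove this with a reduction lemma: if $Q$ is transitive of $p$-power degree with a block system ${\cal D}$, and $Q_0\le Q$ is a Sylow $p$-subgroup (automatically transitive, since a Sylow $p$-subgroup of a transitive group of $p$-power degree is transitive), then the ${\cal D}$-fixer block systems of $Q$ and of $Q_0$ coincide. Granting this, I apply it with $Q=P/{\cal B}_{i-1}$, ${\cal D}={\cal B}_i/{\cal B}_{i-1}$, and $Q_0=P_0/{\cal B}_{i-1}$, where $P_0$ is a Sylow $p$-subgroup of $P$ containing $\la\tau_n\ra$; this yields ${\cal E}_i(P)={\cal E}_i(P_0)$ for all $i$, so the whole statement reduces to the $p$-group $P_0$, to which the first paragraph applies.

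To prove the reduction lemma, observe first that $\fix_{Q_0}({\cal D})=Q_0\cap\fix_Q({\cal D})$ is a Sylow $p$-subgroup of $\fix_Q({\cal D})$ because $\fix_Q({\cal D})\tl Q$. One direction is easy: $\WStab_{Q_0}(B)\le\WStab_Q(B)$ by maximality of the latter, so $B\equiv_Q B'$ forces $B\equiv_{Q_0}B'$. For the converse, suppose $\WStab_Q(B)^{B'}$ is transitive and put $W=\WStab_Q(B)$. A Sylow $p$-subgroup $W_0$ of $W$ restricts onto a Sylow $p$-subgroup of every transitive constituent $W^{B''}$, and such a Sylow $p$-subgroup is transitive whenever $W^{B''}$ is; hence $W_0^{B''}$ is trivial or transitive for all $B''$, equals $1$ on $B$, and is transitive on $B'$, so $W_0$ itself meets the defining conditions of a wreath stabilizer. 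As $W_0$ is a $p$-subgroup of $\fix_Q({\cal D})$ it lies in some $\fix_Q({\cal D})$-conjugate of $\fix_{Q_0}({\cal D})$; conjugating $Q_0$ by the corresponding block-fixing element does not alter its fixer block system, so we may take $W_0\le\fix_{Q_0}({\cal D})$, whence $W_0\le\WStab_{Q_0}(B)$ and $\WStab_{Q_0}(B)^{B'}$ is transitive. Therefore $\equiv_Q$ and $\equiv_{Q_0}$ are the same relation and the two fixer block systems agree.

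The main difficulty is this reduction, and within it the claim that a Sylow $p$-subgroup $W_0$ of $W$ restricts transitively to every block on which $W$ does; this combines the surjectivity of the restriction $W\to W^{B''}$ with the classical fact that a Sylow $p$-subgroup of a transitive group of $p$-power degree is transitive. The only other point needing care is the conjugation bookkeeping that places $W_0$ inside a single fixed $\WStab_{Q_0}(B)$, which relies on invariance of the fixer block system under conjugation by elements of $\fix_Q({\cal D})$. Everything else is the routine quotient relabeling already present in Lemma \ref{p^kfixers}.
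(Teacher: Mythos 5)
Your core reduction is the same as the paper's: the paper proves this by induction on $n$, noting that the base case $n=2$ is trivial because ${\cal E}_2=\{\Z_{p^2}\}$, that the induction hypothesis applied to $P/{\cal B}_1$ yields ${\cal E}_2\preceq{\cal E}_3\preceq\cdots\preceq{\cal E}_n$, and that Lemma \ref{p^kfixers} supplies ${\cal E}_1\preceq{\cal E}_2$; your level-by-level quotient by ${\cal B}_{i-2}$ is just an unrolled version of that induction. Where you genuinely depart from the paper is in flagging that Lemma \ref{p^kfixers} is stated only for transitive \emph{$p$-groups}, while the present statement concerns an arbitrary transitive group containing $\tau_n$; the paper invokes Lemma \ref{p^kfixers} without comment, so as written its argument really only covers the $p$-group case. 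Your Sylow reduction closes that gap, and I believe it is correct as sketched: $\fix_{Q_0}({\cal D})=Q_0\cap\fix_Q({\cal D})$ is a Sylow $p$-subgroup of the normal subgroup $\fix_Q({\cal D})$; the image of a Sylow $p$-subgroup of $\WStab_Q(B)$ in each transitive constituent is a Sylow $p$-subgroup of that constituent and hence transitive since the block sizes are $p$-powers; and conjugation by an element of $\fix_Q({\cal D})$ preserves the relation $\equiv$, so $\equiv_Q$ and $\equiv_{Q_0}$ coincide (in the easy direction you should add the half-sentence that $\WStab_{Q_0}(B)^{B'}=1$ together with maximality forces $\WStab_{Q_0}(B)\le\WStab_{Q_0}(B')$, whence equality by symmetry). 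What each approach buys: the paper's proof is three lines but proves the lemma only at $p$-group generality, which is all it ever uses (in Theorem \ref{graph} these results are applied to a Sylow $p$-subgroup); your version actually justifies the statement as written for arbitrary transitive $P\ge\la\tau_n\ra$, at the cost of an extra reduction lemma. One cosmetic point: ${\cal E}_0$ is not defined by the statement, so the case $i=1$ should either be dropped or handled by a convention such as your ${\cal E}_0={\cal B}_0$.
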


\begin{proof}
We proceed by induction on $n$.  If $n = 2$, then ${\cal E}_2 = \{\Z_{p^2}\}$ and so ${\cal E}_1\preceq{\cal E}_2$.  Assume the result holds for all $n-1\ge 2$ and let $P\le S_{p^n}$ be a transitive group that contains $\tau_n$.  Then ${\cal B}_1$ is a normal block system of $P$, and ${\cal E}_2\preceq{\cal E}_3\preceq\ldots\preceq{\cal E}_n$ by the induction hypothesis.  By Lemma \ref{p^kfixers} ${\cal E}_1\preceq{\cal E}_2$ and the result follows.
\end{proof}

\begin{definition}
Let $p$ be prime and $n\ge 1$ an integer.  Define a {\bf primary key space} ${\bf K}_{p^n}$ to be the set of all integer vectors $(k_1,\ldots,k_n)$ satisfying the following two properties:
\begin{enumerate}
\item  $k_i < i$ for each $1\le i\le n$, and
\item $k_{i-1}\le k_i$ for each $2\le i\le n$.
\end{enumerate}
\noindent A vector in ${\bf K}_{p^n}$ is called a {\bf primary key}.
\end{definition}

Note that the primary key space ${\bf K}_{p^n}$ is independent of the choice of $p$, and that as $k_i < i$, we have $k_1 = 0$.  Primary keys were used in \cite{Muzychuk2004} to define partitions of $\Z_{p^n}$.  While we will not prove this here, the partition produced by a primary key when $p$ is odd is the set of orbits of the point stabilizer of a Sylow $p$-subgroup of a $2$-closed transitive group which contains $(\Z_{p^n})_L$.  We will use primary keys to construct the Sylow $p$-subgroups of $5/2$-closed transitive groups which contain $(\Z_{p^n})_L$.

\begin{definition}
Let ${\bf k} = (k_1,k_2,\ldots,k_n)$ be a primary key.  For fixed $k_i$, we define a subgroup $P_{i,k_i}$ by $P_{i,k_i} = \la \tau_n^{p^{i-1}}\vert_{B_{n-k_i}}:B_{n-k_i}\in{\cal B}_{n-k_i}\ra$.  We then define a group $\Pi({\bf k}) = \la P_{i,k_i}:1\le i\le n\ra$, and call it the subgroup {\bf corresponding} to ${\bf k}$.
\end{definition}

As $k_1 = 0$, the group $P_{1,0} = \la\tau_n^{p^0}\vert_{B_n}\ra = \la\tau_n\ra$, where $B_n = \Z_{p^n}$.  As the set of orbits of $\la\tau_n^{p^{i-1}}\ra$ is the unique block system of $\la\tau_n\ra$ with blocks of size $p^{n-(i-1)} = p^{n-i+1}$, we see the set of orbits of $\la\tau_n^{p^{i-1}}\ra$ is ${\cal B}_{n-i+1}$.  As $k_i \le i-1$, we see that $n-k_i \ge n-i+1$, and so ${\cal B}_{n-i+1}\preceq{\cal B}_{n-k_i}$.  That is, the orbits of $\la\tau_n^{p^{i-1}}\ra$ are contained within blocks of ${\cal B}_{n-k_i}$.  Thus $\tau_n^{p^{i-1}}\vert_{B_{n-k_i}}$ is well-defined for every $B_{n-k_i}\in{\cal B}_{n-k_i}$, and so $P_{i,k_i}$ is well-defined for every $1\le i\le n$.

One more observation about primary keys.  As one reads ${\cal B}_0\prec{\cal B}_1\prec\ldots\prec{\cal B}_n$ from left to right, the size of the blocks in a block system get larger.  As one reads a primary key from left to right, the size of the blocks in ${\cal B}_{n-k_i}$ get smaller or stay the same.  In order to cause as little confusion in the literature as possible, we are following Muzychuk's definition of a primary key \cite[Section 2.1]{Muzychuk2004}.  It does not seem feasible to always have all things get larger as we go from left to right.  The reason for this is that somehow one must communicate the power of $\tau_n$ together with the size of the blocks we are restricting to.  But as the power of $\tau_n$ gets larger, the size of the blocks we restrict to must get smaller.

We will show that the groups $\Pi({\bf k})$ are the Sylow $p$-subgroups of $5/2$-closed groups of degree $p^n$ that contain $\tau_n$.  We first give some examples of the groups $\Pi({\bf k})$, and a preliminary result.

\begin{example}\label{First example}
Let ${\bf k} = (0,0,\ldots,0)$.  Then $\tau_n^{p^{k_i}} = \tau_n$ for every $1\le i\le n$, and ${\cal B}_{n-0} = {\cal B}_n = \Z_{p^n}$, and so $\Pi({\bf k}) = \la\tau_n\ra$.
\end{example}

\begin{lem}\label{primary key quotient}
Let ${\bf k} = (k_1,k_2,\ldots,k_n)$, and define $\delta:{\cal B}_1\to\Z_{p^{n-1}}$ by $\delta(B_{1,j}) = j$.  Then $\delta(\Pi({\bf k})/{\cal B}_1) = \Pi(k_1,\ldots,k_{n-1})$.
\end{lem}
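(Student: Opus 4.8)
The plan is to exploit that passing to the quotient by ${\cal B}_1$ is a group homomorphism, so that it suffices to compute the image of each generator $P_{i,k_i}$ of $\Pi({\bf k})$ under $g\mapsto\delta(g/{\cal B}_1)$ and then take the subgroup generated. First I would fix the identification concretely: a point $x\in\Z_{p^n}$ lies in the block $B_{1,j}$ with $j\equiv x\pmod{p^{n-1}}$, so under $\delta$ the induced action of $\tau_n$ on ${\cal B}_1$ is $j\mapsto j+1$ in $\Z_{p^{n-1}}$, that is $\delta(\tau_n/{\cal B}_1)=\tau_{n-1}$. More generally, for $1\le i\le n-1$ the element $\tau_n^{p^{i-1}}$ (translation by $p^{i-1}$) induces on ${\cal B}_1$ the map $j\mapsto j+p^{i-1}$, so $\delta(\tau_n^{p^{i-1}}/{\cal B}_1)=\tau_{n-1}^{p^{i-1}}$; when $i=n$ the translation is by $p^{n-1}\equiv 0\pmod{p^{n-1}}$, so $\tau_n^{p^{n-1}}$ acts trivially on ${\cal B}_1$.

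Next I would track what happens to the restrictions. Writing ${\cal B}'_m$ for the block system of $\la\tau_{n-1}\ra$ on $\Z_{p^{n-1}}$ with blocks of size $p^m$, and $B'_{m,j}$ for its blocks, note that the block $B_{n-k_i,j}$ of ${\cal B}_{n-k_i}$ consists exactly of those $x\in\Z_{p^n}$ with $x\equiv j\pmod{p^{k_i}}$. Since $k_i\le n-1$, the $\delta$-images of the ${\cal B}_1$-blocks it contains form $\{m\in\Z_{p^{n-1}}:m\equiv j\pmod{p^{k_i}}\}=B'_{(n-1)-k_i,j}$; thus $\delta$ carries the quotient block system ${\cal B}_{n-k_i}/{\cal B}_1$ onto ${\cal B}'_{(n-1)-k_i}$. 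Because $k_i\le i-1$, we have $p^{i-1}\equiv 0\pmod{p^{k_i}}$, so $\tau_n^{p^{i-1}}$ preserves each block $B_{n-k_i,j}$, and on the quotient the restriction $\tau_n^{p^{i-1}}\vert_{B_{n-k_i,j}}/{\cal B}_1$ is exactly translation by $p^{i-1}$ on $B'_{(n-1)-k_i,j}$ and the identity elsewhere. Hence $\delta\big(\tau_n^{p^{i-1}}\vert_{B_{n-k_i,j}}/{\cal B}_1\big)=\tau_{n-1}^{p^{i-1}}\vert_{B'_{(n-1)-k_i,j}}$, and letting $j$ range over all blocks gives $\delta(P_{i,k_i}/{\cal B}_1)=P'_{i,k_i}$ for each $1\le i\le n-1$, where $P'_{i,k_i}$ is the corresponding generator of $\Pi(k_1,\ldots,k_{n-1})$.

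Finally I would assemble the pieces. For the top index $i=n$, the first paragraph shows $\tau_n^{p^{n-1}}\vert_{B_{n-k_n,j}}$ acts trivially on ${\cal B}_1$ for every block, so $P_{n,k_n}/{\cal B}_1=1$ and this generator drops out in the quotient, matching the fact that $(k_1,\ldots,k_{n-1})$ has only $n-1$ coordinates (and it is readily checked to lie in ${\bf K}_{p^{n-1}}$, since $k_i<i$ and $k_{i-1}\le k_i$ are inherited). As $\delta$ is an isomorphism of symmetric groups and quotienting is a homomorphism,
$$\delta(\Pi({\bf k})/{\cal B}_1)=\la\delta(P_{i,k_i}/{\cal B}_1):1\le i\le n\ra=\la P'_{i,k_i}:1\le i\le n-1\ra=\Pi(k_1,\ldots,k_{n-1}),$$
as desired.

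I expect the only genuine bookkeeping hazard to be the two simultaneous shifts: the drop in degree $n\to n-1$ together with the accompanying shift of block systems ${\cal B}_{n-k_i}\to{\cal B}'_{(n-1)-k_i}$, and making sure the top generator really dies rather than surviving as something nontrivial. Everything else reduces to the two elementary congruences $k_i\le i-1$ and $k_i\le n-1$, which respectively guarantee that the translations stay inside their blocks and that $\delta$ respects the relevant block systems.
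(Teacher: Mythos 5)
Your proof is correct and follows essentially the same route as the paper's: both reduce to computing the image of each generator $P_{i,k_i}$ under the quotient-then-$\delta$ map, identifying $\delta(\tau_n^{p^{i-1}}/{\cal B}_1)$ with $\tau_{n-1}^{p^{i-1}}$ and matching the block systems ${\cal B}_{n-k_i}/{\cal B}_1$ with ${\cal B}'_{n-1-k_i}$. The only differences are cosmetic: you compute via congruences where the paper counts cycle lengths, and you make explicit two points the paper leaves implicit, namely that $P_{n,k_n}$ dies in the quotient and that $(k_1,\ldots,k_{n-1})$ is again a primary key.
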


\begin{proof}
First observe that $\tau(B_{1,j}) = B_{1,j + 1}$.  Hence $\delta(\tau_n) = \tau_{n-1}$.  For $0\le i\le n-1$, let ${\cal B}_i'$ be the block system of $\la\tau_{n-1}\ra$ with blocks of size $p^i$.  As $\Pi({\bf k}) = \la P_{i,k_i}:1\le i\le n\ra$, it suffices to show that $\delta(P_{i,k_i}) = P_{i,k_i}'$, where $P_{i,k_i}' = \la \tau_{n-1}^{p^{i-1}}\vert_{B_{n-1-k_i}}:B_{n-1-k_i}\in{\cal B}_{n-1-k_i}'\ra$, and $1\le i\le n-1$.  Fix $1\le i\le n-1$.  As $P_{i,k_i} = \la \tau_n^{p^{i-1}}\vert_{B_{n-k_i}}:B_{n-k_i}\in{\cal B}_{n-k_i}\ra$, we have that $P_{i,k_i}/{\cal B}_1 =  \la (\tau_n^{p^{i-1}}/{\cal B}_1)\vert_{B_{n-k_i}/{\cal B}_1}:B_{n-k_i}\in{\cal B}_{n-k_i}\ra$.   As $(\tau_n^{p^{i-1}})^{p^{n - i}}(B_i) = \tau_n^{p^{n-1}}(B_i) = B_i$ and $\tau_n^{p^{i-1}}$ is a product of $p^{i-1}$ cycles of length $p^{n-i+1}$, we see that $\tau_n^{p^{i-1}}/{\cal B}_1$ is a product of $p^{i-1}$ cycles of length $p^{n-i}$.  As $\delta(\tau_n) = \tau_{n-1}$ is a $p^{n-1}$-cycle, the power of $\tau_{n-1}$ that gives $p^{i-1}$ cycles of length $p^{n-i}$ is $p^{i-1}$.  It is also clear that ${\cal B}_i/{\cal B}_1 = {\cal B}_{i-1}'$.  Hence $\delta({\cal B}_i) = {\cal B}_{i-1}'$.  Then $(\tau_n^{p^{i-1}}/{\cal B}_1)\vert_{B_{n-k_i}/{\cal B}_1} = \tau_{n-1}^{p^{i-1}}\vert_{B_{n-1-k_i}}$,
where $B_{n-1-k_i}\in{\cal B}_{n-1-k_i}'$, and  $\delta(P_{i,k_i}) = P_{i,k_i}'$.  The result follows.
\end{proof}

\begin{example}
Let ${\bf k} = (0,1,2,\ldots,n-2,n-1)$.  Then $\Pi({\bf k}) \cong\Z_p\wr_n\Z_p$, which is a Sylow $p$-subgroup of $S_{p^n}$ by \cite[Section 2]{Passman1968}.  We proceed by induction on $n$, with the first step of the induction being the primary key $(0)$, which gives $(\Z_p)_L$ as in Example \ref{First example}.  Inductively assume that the example is correct for the primary key $(0,1,2,\ldots,n-1)$ and consider the primary key ${\bf k} = (0,1,2,\ldots,n-1,n)$.  By Lemma \ref{primary key quotient}, we have that $P({\bf k})/{\cal B}_1$ is the subgroup corresponding to the primary key ${\bf k}' = (0,1,\ldots,n-1)$.  By the induction hypothesis, we see that $\Pi({\bf k})/{\cal B}_1\cong\Z_p\wr_{n-1}\Z_p$.  As $\fix_{\Pi({\bf k})}({\cal B}_1) = P_{n,n-1} = \la\tau_n^{p^{n-1}}\vert_{B_1}:B_1\in{\cal B}_1\ra$, we conclude that $\fix_{\Pi({\bf k})}({\cal B}_1)\cong \Z_p^{n-1}$.  Hence $\Pi({\bf k})\cong\Z_p\wr_n\Z_p$.
\end{example}

\begin{example}
Let $n = 3$ and ${\bf k} = (0,0,2)$.  It can be shown by inspecting $\Pi({\bf k})$ for all possible primary keys with at most three coordinates that ${\bf k}$ is the smallest primary key for which $\Pi({\bf k})$ is neither cyclic nor a wreath product.  By Example \ref{First example} and Lemma \ref{primary key quotient} we see that $\Pi({\bf k})/{\cal B}_1$ is cyclic.  Also, by definition, $\fix_{\Pi({\bf k})}({\cal B}_1) = \la\tau_3^{p^2}\vert_{B_2}:B_2\in{\cal B}_2\ra$ and so has order $p^p$.  Thus $\vert \Pi({\bf k})\vert = p^{p + 2}$.  Let $\Gamma$ be a Cayley digraph of order $p^3$ that contains $\Pi({\bf k})$.  Then, using the notation of \cite[Section 5.3]{Book}, we see that $\Gamma$ is a $(K,L)$-generalized wreath product with $K = \la p^2\ra$ and $L = \la p\ra$.  In general, for any primary key ${\bf k}$, if $\Pi({\bf k})$ is not cyclic, then any Cayley digraph of order $p^n$ that contains $\Pi({\bf k})$ will also be a generalized wreath product (although we will not give a proof of this fact here).
\end{example}

We now prove the main result of this section.

\begin{thrm}\label{graph}
Let $p\ge 3$ be prime, $n\ge 1$, and $G\le S_{p^n}$ that contains a regular cyclic subgroup and is $5/2$-closed. If $P$ is a Sylow $p$-subgroup of $G$, then $P$ is permutation isomorphic to $\Pi({\bf k})$ for some primary key ${\bf k}\in\Z_p^n$. Further, $P$ is $5/2$-closed.
\end{thrm}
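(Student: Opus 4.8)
The plan is to prove the two assertions of Theorem \ref{graph} by induction on $n$, using the quotient theorem (Theorem \ref{quotient 5/2-closed}) together with the structural results Lemmas \ref{bottom}, \ref{p^kfixers}, and \ref{p^kfixercor}. The base case $n=1$ is immediate: here $G$ has prime degree $p$, the only regular cyclic subgroup is $\la\tau_1\ra=(\Z_p)_L$, any Sylow $p$-subgroup is $(\Z_p)_L$ itself, and this is permutation isomorphic to $\Pi(0)=\la\tau_1\ra$ by Example \ref{First example}; it is $5/2$-closed as it has no nontrivial normal block systems. So I would assume $n\ge 2$ and that the theorem holds for all degrees $p^m$ with $m<n$.

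\medskip

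For the inductive step I would first pass to the quotient $G/{\cal B}_1$. Since $G$ contains a regular cyclic subgroup $\la\tau_n\ra$, the block system ${\cal B}_1$ (orbits of $\la\tau_n^{p^{n-1}}\ra$) is a normal block system of $G$, and $\tau_n/{\cal B}_1=\tau_{n-1}$ generates a regular cyclic subgroup of $G/{\cal B}_1$ acting on $p^{n-1}$ points. The crucial point is to verify the hypothesis of Theorem \ref{quotient 5/2-closed} for the block system ${\cal B}_1$, namely that whenever ${\cal B}_1\preceq{\cal C}$ is a normal block system of a transitive subgroup $H\le G$, the block system induced by the ${\cal C}/{\cal B}_1$-fixer block system of $H/{\cal B}_1$ is refined by the ${\cal B}_1$-fixer block system ${\cal E}_1$ of $G$. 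Because $\la\tau_n\ra\le G$, every normal block system of any transitive subgroup is one of the ${\cal B}_i$, so ${\cal C}={\cal B}_j$ for some $j\ge 1$, and this hypothesis is exactly the content of Lemmas \ref{p^kfixers} and \ref{p^kfixercor} (the chain ${\cal E}_1\preceq{\cal E}_2\preceq\cdots\preceq{\cal E}_n$). Hence Theorem \ref{quotient 5/2-closed} gives that $G/{\cal B}_1$ is $5/2$-closed. By induction, a Sylow $p$-subgroup of $G/{\cal B}_1$ is permutation isomorphic to $\Pi({\bf k}')$ for some primary key ${\bf k}'=(k_1,\ldots,k_{n-1})\in{\bf K}_{p^{n-1}}$.

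\medskip

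Next I would reconstruct $P$ from $\fix_P({\cal B}_1)$ and $P/{\cal B}_1$. A Sylow $p$-subgroup $P$ of $G$ maps onto a Sylow $p$-subgroup of $G/{\cal B}_1$, so $P/{\cal B}_1\cong\Pi({\bf k}')$ after relabeling via $\delta$, matching the quotient computation in Lemma \ref{primary key quotient}. The kernel $\fix_P({\cal B}_1)$ acts trivially on ${\cal B}_1$ and acts on each block $B\in{\cal B}_1$ of size $p$; I would argue that on each block this action is either trivial or the full regular $(\Z_p)_L$, and that the pattern of blocks on which it is nontrivial is governed by the ${\cal B}_1$-fixer block system ${\cal E}_1={\cal B}_{n-k_n}$ for the appropriate final coordinate $k_n$. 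Concretely, the element $\tau_n^{p^{n-1}}$ is a product of $p^{n-1}$ independent $p$-cycles, one on each block of ${\cal B}_1$, and since $G$ is $5/2$-closed and these blocks are grouped by ${\cal E}$, the restrictions $\tau_n^{p^{n-1}}\vert_{B_{n-k_n}}$ all lie in $G$; this yields exactly the generator set defining $P_{n,k_n}$, so $\fix_P({\cal B}_1)=P_{n,k_n}$. Combining $P/{\cal B}_1\cong\Pi(k_1,\ldots,k_{n-1})$ with $\fix_P({\cal B}_1)=P_{n,k_n}$ and checking $k_{n-1}\le k_n$ (forcing ${\bf k}=(k_1,\ldots,k_n)\in{\bf K}_{p^n}$ via the refinement ${\cal E}_{n-1}\preceq{\cal E}_n$ of Lemma \ref{p^kfixercor}) identifies $P$ with $\Pi({\bf k})$. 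Finally, $5/2$-closedness of $P$ follows since $P$ is a $p$-group (so $P^{(5/2)}$ is a $p$-group by Lemma \ref{5/2 closure of p-group}), $P\le P^{(5/2)}\le G^{(5/2)}=G$, and by applying the same structural analysis to $P^{(5/2)}$ one sees it has the same quotient and same kernel as $P$, forcing $P^{(5/2)}=P$.

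\medskip

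I expect the main obstacle to be the precise bookkeeping in the reconstruction step: showing that the kernel $\fix_P({\cal B}_1)$ is exactly $P_{n,k_n}$ rather than something larger or smaller. The delicate part is tying the ``active'' blocks of the kernel to the ${\cal B}_1$-fixer block system ${\cal E}_1$, which requires both the $5/2$-closure hypothesis (to guarantee the relevant restrictions $\tau_n^{p^{n-1}}\vert_{B}$ lie in $G$) and the regular-cyclic hypothesis combined with the obstruction from Lemma \ref{bottom} and \cite[Lemma 4]{DobsonW2002} (to prevent the kernel from containing extra regular elementary abelian subgroups that would not fit the $\Pi({\bf k})$ pattern). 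Ensuring the value $k_n$ extracted from $\fix_P({\cal B}_1)$ is consistent with the tail of ${\bf k}'$ inherited from the quotient—i.e., verifying the monotonicity condition $k_{n-1}\le k_n$ rather than merely $k_n<n$—is where Lemma \ref{p^kfixercor} does the essential work, and I would be careful that the permutation isomorphism realizing $P\cong\Pi({\bf k})$ is genuinely a relabeling of the points and not merely an abstract group isomorphism.
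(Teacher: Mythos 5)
Your overall architecture matches the paper's: induct on $n$, use Theorem \ref{quotient 5/2-closed} to make the quotient $5/2$-closed, identify the kernel $\fix_P({\cal B}_1)$ with $P_{n,k_n}$ via $5/2$-closedness, and invoke Lemma \ref{p^kfixers} for the monotonicity $k_{n-1}\le k_n$. One organizational difference worth noting: the paper first observes $P^{(5/2)}\le G$ and hence $P^{(5/2)}=P$ by Lemma \ref{5/2 closure of p-group}, so the induction is run on the $p$-group $P$ itself rather than on $G$; this matters for the point below.

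The genuine gap is in your verification of the hypothesis of Theorem \ref{quotient 5/2-closed}. That hypothesis quantifies over \emph{every} transitive subgroup $H$ of the ambient group and \emph{every} normal block system ${\cal C}$ of $H$ with ${\cal B}_1\preceq{\cal C}$, and asks that the block system induced by the ${\cal C}/{\cal B}_1$-fixer block system of $H/{\cal B}_1$ be refined by ${\cal E}_1$. You dispose of this by asserting that, since $\la\tau_n\ra\le G$, every normal block system of every transitive subgroup is one of the ${\cal B}_i$, and then citing Lemmas \ref{p^kfixers} and \ref{p^kfixercor}. Both halves fail. First, a transitive subgroup $H$ need not contain $\tau_n$: already inside $\Z_p\wr\Z_p\le S_{p^2}$ there is a regular elementary abelian subgroup with $p+1$ block systems of block size $p$, only one of which is ${\cal B}_1$; and if you induct on $G$ rather than $P$, the transitive subgroups need not even be $p$-groups. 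Second, even when ${\cal C}={\cal B}_j$, Lemmas \ref{p^kfixers} and \ref{p^kfixercor} are statements about the fixer block systems of $P$ (resp.\ $P/{\cal B}_{i-1}$), not about the ${\cal C}/{\cal B}_1$-fixer block system of $H/{\cal B}_1$ for a proper transitive subgroup $H$; wreath stabilizers only shrink on passing to subgroups, so these are genuinely different partitions and the needed refinement is not formal. The paper closes exactly this gap with a dedicated argument: for arbitrary $H$ and ${\cal C}$ it applies Lemma \ref{E F relationship} to get ${\cal C}\preceq{\cal E}_1$ or ${\cal E}_1\prec{\cal E}_{\cal C}$, and in the bad case uses Lemma \ref{bottom} applied to $L=\Stab_P(E_1)^{E_1}$ (shown not to be regular cyclic) to produce an element $\rho$ with $\rho\vert_{E_1}\in P$ contradicting $\fix_P({\cal B}_1)=P_{n,\ell}$. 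The remaining steps of your outline (kernel identification, assembly of ${\bf k}$, and $P^{(5/2)}=P$) do track the paper, though the reverse inclusion $\fix_P({\cal B}_1)\le P_{n,k_n}$ that you flag as the main obstacle is indeed the other place where the $5/2$-closure must be exploited rather than merely cited.
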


\begin{proof}
By \cite[Corollary 1.1.10]{Book}, we see that $P$ is transitive.  As $P\le G$ and $G$ is $5/2$-closed, we have that $P^{(5/2)}\le G$. By Lemma \ref{5/2 closure of p-group}, we see $P^{(5/2)}$ is a $p$-group and so as $P$ is a Sylow $p$-subgroup of $G$, $P^{(5/2)} = P$.  It thus suffices to show that a $5/2$-closed $p$-subgroup of $S_{p^n}$ that contains a regular cyclic subgroup is permutation isomorphic to $\Pi({\bf k})$, for some primary key ${\bf k}\in \Z_p^n$.

We proceed by induction on $n$.  If $n = 1$, then $P = (\Z_p)_L$.  The result then follows with primary key $(0)$.  
Assume the result is true for $n - 1\ge 2$, and let $G\le S_{p^n}$ be transitive and $5/2$-closed.

We have observed that $P$ has ${\cal B}_i$, $0\le i\le n$ as block systems, and these block systems are the only block systems of $P$.  If $\vert P\vert = p^n$, then $P$ is a regular cyclic subgroup and the result follows by Example \ref{First example} with ${\bf k} = (0,0,\ldots,0)$. We thus assume without loss of generality that $\vert P\vert > p^n$.

Let ${\cal E}_i/{\cal B}_{i-1}$ be the ${\cal B}_i/{\cal B}_{i-1}$-fixer block system of $P$, $1\le i\le n-1$.  As $\vert P\vert > p^n$ and $p$ is odd, we have by Lemma \ref{bottom} that $\vert\fix_P({\cal B}_1)\vert > p^2$.  As $G$ is $5/2$-closed, we see $\fix_P({\cal B}_1) = \la\tau_n^{p^{n-1}}\vert_E:E\in{\cal E}_1\ra$.  As ${\cal E}_1 = {\cal B}_\ell$ for some $1\le \ell\le n-1$, we see $\fix_P({\cal B}_1) = P_{n,\ell}$.

We next use Theorem \ref{quotient 5/2-closed} to show $P/{\cal B}_1$ is $5/2$-closed.  Let $H\le P$ be transitive with ${\cal C}/{\cal B}_1$ a normal block system of $H/{\cal B}_1$ and ${\cal E}_{\cal C}/{\cal B}_1$ the ${\cal C}/{\cal B}_1$-fixer block system of $H/{\cal B}_1$.  By Lemma \ref{E F relationship} we have that ${\cal C}\preceq{\cal E}_1$ or ${\cal E}_1\prec{\cal E}_{\cal C}$.  Suppose ${\cal E}_{\cal C}\not\preceq{\cal E}_1$.  Then ${\cal  C}\prec{\cal E}_1$.  Let $E_C\in{\cal E}_{\cal C}$ and $C\in{\cal C}$ with $C\subseteq E_C$.  As ${\cal E}_{\cal C}\not\preceq{\cal E}_1$, $\WStab_{H/{\cal B}_1}(C/{\cal B}_1)^{C'/{\cal B}_1}\not = 1$ for some $C'\in{\cal C}$ with $C'\subseteq E_1$, where $E_1\in{\cal E}_1$ that contains $C$.  Let $L = \Stab_P(E_1)^{E_1}$.  As $H\le P$, $L$ contains $\Stab_H(E_1)^{E_1}$, and $\Stab_H(E_1)^{E_1}$ contains $\fix_H({\cal C})^{E_1}$ as $E_1$ is a union of blocks of ${\cal C}$ because ${\cal C}\prec{\cal E}_1$.  As $\WStab_{H/{\cal B}_1}(C/{\cal B}_1)^{C'/{\cal B}_1}\not = 1$ and $C,C'\subset E_1$, we see that $L$ is not a regular cyclic subgroup.  Let ${\cal B}_{E_1}$ be the set of blocks of ${\cal B}_1$ that are contained in $E_1$. By Lemma \ref{bottom} we see that $\fix_L({\cal B}_{E_1})$ has order at least $p^2$.  Then there exists $1\not = \rho\in P$ such that $\rho^{E_1}\in\fix_P({\cal B}_1)^{E_1}$ and $\rho^{E_1}\not\in\la\tau_n^{p^{n-1}}\ra^{E_1}$.  As $P$ is $5/2$-closed, we see that $\rho\vert_{E_1}\in P$, in which case the ${\cal B}_1$-fixer block system of $P$ is not ${\cal E}_1$, a contradiction.  This then implies that ${\cal E}_1\preceq {\cal E}_{\cal C}$.  As $H$ and ${\cal C}$ are arbitrary, by Theorem \ref{quotient 5/2-closed} we have that $G/{\cal B}_1$ is $5/2$-closed.  By the induction hypothesis, there exists a primary key ${\bf k}'\in\Z_p^{n-1}$ such that $P/{\cal B}_1 = \Pi({{\bf k}'})$.

Let ${\bf k'} = (k_1',k_2',\ldots,k_{n-1}')$, and ${\bf k} = (k_1',k_2',\ldots,k_{n-1}',\ell) = (k_1,k_2,\ldots,k_n)$.  We now show that ${\bf k}$ is a primary key.  Note that ${\cal E}_1 = P_{n,\ell}$, and by induction we have that ${\cal E}_2/{\cal E}_1 = P_{n-1,k_{n-1}}/{\cal B}_1$.  By Lemma \ref{p^kfixers} we see that ${\cal E}_1\preceq{\cal E}_2$.  Hence $k_{n-1}\le k_n$.  As $1\le \ell\le n-1$, $k_n\le n-1$ and so ${\bf k}$ is a primary key.

We now show that $\Pi({\bf k}) = P$.  By the induction hypothesis and Lemma \ref{primary key quotient} we see that $P/{\cal B}_1 = \Pi({\bf k})/{\cal B}_1$.  As $P$ is $5/2$-closed, we see that $\tau_n^{p^{k_i}}\vert_{B_{n-i}}\in P$ for every coordinate $k_i$ of ${\bf k}$ and $B_{n-i}\in{\cal B}_{n-i}$.  Hence $P_{i,k_i}\le P$ for each coordinate $k_i$ of ${\bf k}$ and so $\Pi({\bf k})\le P$.  Let $\gamma\in P$.  Then there exists $\delta\in \Pi({\bf k})$ such that $\gamma/{\cal B}_1 = \delta/{\cal B}_1$.  Hence $\delta^{-1}\gamma\in\fix_P({\cal B}_1) = P_{n,\ell}\le \Pi({\bf k})$.  Thus $\gamma\in \Pi({\bf k})$ and $P = \Pi({\bf k})$.  The result follows by induction.
\end{proof}

We will show in \cite{Dobson2023epreprint} that if ${\bf k}$ is a primary key such that $\Pi({\bf k})$ cannot be written as a nontrivial wreath product, then every minimal transitive subgroup of $\Pi({\bf k})$ is a regular cyclic subgroup.  By \cite[Theorem 33]{Dobson2005} this will imply that any permutation group $G$ of $S_{p^n}$ with Sylow $p$-subgroup $\Pi({\bf k})$ is either $2$-transitive or $\Pi({\bf k})\tl G$.  The normalizers of such $\Pi({\bf k})$ are then calculated in \cite{Dobson2023epreprint}.  This results not only in a classification of $5/2$-closed groups of odd prime power degree which contain a regular cyclic subgroup, but also a classification of $2$-closed groups of odd prime power degree which contain a regular cyclic subgroup.  These, in turn, give the automorphism group of every circulant digraph of odd prime power degree, as well as the automorphism group of every configuration of odd prime power order that contains a regular cyclic subgroup.

\providecommand{\bysame}{\leavevmode\hbox to3em{\hrulefill}\thinspace}
\providecommand{\MR}{\relax\ifhmode\unskip\space\fi MR }
\providecommand{\MRhref}[2]{%
  \href{http://www.ams.org/mathscinet-getitem?mr=#1}{#2}
}
\providecommand{\href}[2]{#2}

\end{document}